\newtheorem{lemma}{Lemma}
\newtheorem{theorem}{Theorem}
\newtheorem{problem}{Problem}
\newtheorem{question}{Question}
\theoremstyle{definition}
\newtheorem{remark}{Remark}
\newcommand{\tKP}{\mathbf{KP}}
\newcommand{\tSSP}{\mathbf{SSP}}
\newcommand{\Mat}{\mathrm{Mat}}
\newcommand{\NP}{\mathsf{NP}}
\newcommand{\MP}{\mathcal{M}_{\max, {+}}^k}
\newcommand{\MT}{\mathcal{M}_{\max, {\times}}^k}
\newcommand{\RP}{\mathcal{R}_{\max, {+}}^k}
\newcommand{\QP}{\mathcal{Q}_{\max, {+}}^k}
\newcommand{\ZMP}{\mathcal{N}_{\max, {+}}}
\newcommand{\ZMT}{\mathcal{N}_{\max, {\times}}}
\newcommand{\ZP}{\mathcal{N}_{+}}
\newcommand{\ZT}{\mathcal{N}_{\times}}
\newcommand{\SSP}{\mathbf{SSP}(\ZP)}
\newcommand{\XThreeC}{\mathbf{X3C}}
\newcommand{\KP}{\mathbf{KP}(\ZP)}
\newcommand{\MKP}{\mathbf{KP}(\ZT)}
\newcommand{\SPP}{\mathbf{SSP}(\ZT)}
\newcommand{\SSMPP}{\mathbf{SSP}(\MP)}
\newcommand{\SSMTP}{\mathbf{SSP}(\MT)}
\newcommand{\KMPP}{\mathbf{KP}(\MP)}
\newcommand{\KMTP}{\mathbf{KP}(\MT)}
\DeclareMathOperator{\size}{size}
\DeclareMathOperator{\V}{\mathrm{V}}
\title{On tropical knapsack-type problems}
\author{I. M. Buchinskiy \and M. V. Kotov \and A. V. Treier}
\address{I. M. Buchinskiy\,\orcidlink{0000-0001-8637-9127}, Sobolev Institute of Mathematics of SB RAS, Omsk, Russia}
\email{buchvan@mail.ru}
\address{M. V. Kotov\,\orcidlink{0000-0002-2820-1053}, Sobolev Institute of Mathematics of SB RAS,  Omsk, Russia}
\email{matvej.kotov@gmail.com}
\address{A. V. Treier\,\orcidlink{0000-0003-4896-1620}, Sobolev Institute of Mathematics of SB RAS, Omsk, Russia; Caucasus Mathematical Center of Adyghe State University, Maykop, Russia}
\email{alexander.treyer@gmail.com}
\date{\today}
\begin{document}

\begin{abstract}
    In this paper, we investigate the computational complexity of the knapsack problem and subset sum problem for the following tropical algebraic structures.
    We consider the semigroup of square matrices of size $k \times k$ with non-negative entries over the max-plus algebra and the semigroup square matrices of size $k \times k$ with positive entries over the max-times algebra.
    We prove that the knapsack problem and subset sum problem for these structures are $\textsf{NP}$-complete.
    We demonstrate that there are pseudo-polynomial algorithms to solve these problems.
    Also, we show that for the latter semigroup, there are polynomial generic algorithms to solve the knapsack problem and the subset sum problem.
\end{abstract}

\maketitle

\section{Introduction}
The 0--1 knapsack problem, the unbounded knapsack problem, and the subset sum problem are well-known problems in combinatorial optimization.
The \textit{0--1 knapsack problem} can be formulated as follows.
\begin{problem}\label{P1}
    Given a number $c$ and $n$ items, each item $i$ has a profit $p_i$ and weight $w_i$, maximize $\sum_{i = 1}^n p_i x_i$ subject to $\sum_{i = 1}^n w_i x_i \leq c$, $x_i \in \{0, 1\}$ for $i \in \{1, \ldots, n\}$.
\end{problem}

When the profit and the weight of each item are identical, the corresponding problem is called the \textit{subset sum problem}:
\begin{problem}\label{P2}
    Given a number $c$ and $n$ items, each item $i$ has a weight $w_i$, maximize $\sum_{i = 1}^n w_i x_i$ subject to $\sum_{i = 1}^n w_i x_i \leq c$, $x_i \in \{0, 1\}$ for $i \in \{1, \ldots, n\}$. 
\end{problem}

Another variant is that each item can be chosen multiple times. The \textit{unbounded knapsack problem} can be formulated as follows:
\begin{problem}\label{P3}
    Given a number $c$ and $n$ items, each item $i$ has a profit $p_i$ and weight $w_i$, maximize $\sum_{i = 1}^n p_i x_i$ subject to $\sum_{i = 1}^n w_i x_i \leq c$, $x_i \in \mathbb{Z}_{\geq 0}$ for $i \in \{1, \ldots, n\}$.
\end{problem}

Also, one can consider decision versions of such problems. 

\begin{problem}\label{ClassicalSSP}
    Given a number $c$ and $n$ items, each item $i$ has weight $w_i$, decide if $x_1 w_1 + \ldots + x_n w_n = c$ for some $x_1, \ldots, x_n \in \{0, 1\}$.
\end{problem}

\begin{problem}\label{ClassicalKnapsack}
    Given a number $c$ and $n$ items, each item $i$ has weight $w_i$, decide if $x_1 w_1 + \ldots + x_n w_n = c$ for some $x_1, \ldots, x_n \in \mathbb{Z}_{\geq 0}$.
\end{problem}

There are other variations of Problems~\ref{P1}---\ref{ClassicalKnapsack}. Knapsack-type problems have been studied for many years. 
Many of such problems are $\NP$-complete, but often there are pseudo-polynomial algorithms based on dynamical programming. 
A good overview of the results in this area can be found in two papers by Cacchiani, Iori, Locatelli, Martello~\cite{CacchianiIoriLocatelliMartello2022_1, CacchianiIoriLocatelliMartello2022_2}.

Miasnikov, Nikolaev, and Ushakov~\cite{MiasnikovNikolaevUshakov2015} generalized Problem~\ref{ClassicalSSP} and Problem~\ref{ClassicalKnapsack} to arbitrary groups.

\begin{problem}\label{Problem6}
    Let $\mathcal{G}$ be a group. Given $w_1, \ldots, w_n, c\in \mathcal{G}$, decide if $w_1^{x_1} \ldots w_k^{x_n} = c$ for some non-negative integers $x_1, \ldots, x_n$.
\end{problem}

\begin{problem} \label{Problem7}
    Let $\mathcal{G}$ be a group. Given $w_1, \ldots, w_n, c\in \mathcal{G}$, decide if $w_1^{x_1} \ldots w_k^{x_n} = c$ for some $x_1, \ldots, x_n \in \{0, 1\}$.
\end{problem}

They used the following terminology.
Problem~\ref{Problem6} is called the \textit{subset sum problem for $\mathcal{G}$} and denoted by $\mathbf{SSP}(\mathcal{G})$.
Problem~\ref{Problem7} is called the \textit{knapsack problem for $\mathcal{G}$} and denoted by $\mathbf{KP}(\mathcal{G})$.
These problems can be generalized to any semigroup.

Studying the knapsack problem and the subset sum problem for groups has become a very active area of research.
This area is sometimes called non-commutative discrete optimization. Let us give a very brief overview of the scope of tris research.
In the pioneering paper in this field ~\cite{MiasnikovNikolaevUshakov2015}, Miasnikov, Nikolaev, and Ushakov stated some knapsack-type problems in groups and proved many initial results: they proved that $\tSSP(\mathcal{G})$ is $\NP$-complete in free metabelian groups, metabelian Baumslag--Solitar groups, $\mathbb{Z} \wr \mathbb{Z}$, and the Thompson group.
Also, they proved that $\tSSP(\mathcal{G})$ is polynomial in nilpotent groups and hyperbolic groups (see also \cite{Lohrey2020} for results on $\tKP(\mathcal{G})$ in hyperbolic groups).
In contrast with the polynomial complexity- of the subset sum problem for nilpotent groups, the knapsack problem is undecidable for them (see \cite{KonigLohreyZetzsche2016, MishchenkoTreier2017} and also \cite{MishchenkoTreier2018KP}, where the authors constructed a so-called universal input for the knapsack problem in nilpotent groups).
In \cite{FrenkelNikolaevUshakov2016}, the knapsack problem and the subset sum problem in direct and free products of groups were studied.
The knapsack problem was also intensively studied for partially commutative groups \cite{LohreyZetzsche2016, LohreyZetzsche2017}.
There are quite a lot of interesting results on $\tKP(\mathcal{G})$ and $\tSSP(\mathcal{G})$ in solvable groups.
It was proved that the subset sum problem is strongly $\NP$-complete for the lamplighter group $L = \mathbb{Z}_2 \wr \mathbb{Z}$, which implies the same result for a wide class of groups that contain $L$ as a subgroup (see \cite{MishchenkoTreier2018SSP} and \cite{GanardiKonigLohreyZetzsche2018}, where authors proved additionally the decidability of the knapsack problem for free solvable groups and stated results on the complexity of the knapsack problem in wreath products).
In \cite{BergstrasserGanardiZetzsche2021}, the conditions when the knapsack problem is decidable in wreath products were presented.
The knapsack problem in solvable non-metabelian groups also attracted the attention of researchers. In the papers \cite{DudkinTreier2018, DudkinTreier2018RUS}, it was proved that the knapsack problem is solvable for the Baumslag--Solitar groups $BS(m, n)$ where the $m \neq 1$ and $n \neq 1$ are relatively prime integers.
The decidability of the knapsack problem for $\mathrm{gcd}(m,n) > 1$ remains open.
As was mentioned before, it was shown in \cite{GanardiKonigLohreyZetzsche2018} that the knapsack problem is solvable in free solvable groups.
Several interesting results were obtained on the decidability and the algorithmic complexity of the knapsack problem in solvable Baumslag-Solitar groups in \cite{GanardiLohreyZetzsche2023}.
Ushakov~\cite{Ushakov2024} recently considered a connection between the problem of finding solutions for a certain constrained spherical equation and the subset sum problem for some class of groups.

Rybalov studied the knapsack problem and the subset sum problem for some matrix semigroups~\cite{Rybalov2023,Rybalov2020, Rybalov2020_2}. He proved that the knapsack problem and the subset sum problem for $\Mat_k(\mathbb{Z}_{\geq0})$ are $\NP$-complete and decidable generically in polynomial time. We will use some of Rybalov's methods in our paper.

Grigoriev and Shpilrain~\cite{GrigorievShpilrain2014,GrigorievShpilrain2019} and Durcheva~\cite{Durcheva2014} suggested using matrix semirings over tropical algebraic structures in cryptography.
For the tropical max-plus algebra, we can consider the set of matrices of size $k\times k$ equipped with matrix addition and matrix multiplication defined in the obvious way. 
It turned out that the behavior of powers of tropical matrices should be studied to analyze these protocols~\cite{KotovUshakov2018,IsaacKahrobaei2021, BuchinskiyKotovTreier2023_2, BuchinskiyKotovTreier2023_1, BuchinskiyKotovTreier2024}.

Muanalifah and Sergeev~\cite{MuanalifahSergeev2022} suggested considering the following tropical discrete logarithm problem.
\begin{problem}
    Given matrices $V, W$, and $C$ over the max-plus algebra of appropriate dimensions, find $x \in \mathbb{Z}_{\geq 0}$ such that $V \otimes W^{\otimes x} = C$.
\end{problem}

Muanalifah~\cite{Muanalifah2022} generalized this problem and considered the two-sided tropical discrete logarithm problem.
\begin{problem}
    Given matrices $W_1, W_2, V,$ and $C$ over the max-plus algebra of appropriate dimensions, find $x_1, x_2 \in \mathbb{Z}_{\geq 0}$ such that $W_1^{\otimes x_1} \otimes V \otimes W_2^{\otimes x_2} = C$.
\end{problem}

Subsequently, Alhussaini, Collett, and Sergeev~\cite{AlhussainiCollettSergeev2024} considered the tropical discrete logarithm problem with a shift and the tropical two-sided discrete logarithm problem with a shift.
\begin{problem}
    Given matrices $V, W,$ and $C$ of appropriate dimensions, find $s \in \mathbb{R}$ and $x \in \mathbb{Z}_{\geq 0}$ such that $s \otimes V \otimes W^{\otimes x} = C$.
\end{problem}
\begin{problem}
    Given matrices $W_1, W_2, V,$ and $C$ of appropriate dimensions, find $x_1, x_2 \in \mathbb{Z}_{\geq 0}$ and $s \in \mathbb{R}$ such that $s \otimes W_1^{\otimes x_1} \otimes V \otimes W_2^{\otimes x_2} = C$.
\end{problem}

Their approach to solving these problems was based on the CSR expansion of tropical matrix powers.

It is natural to increase the number of matrices and consider the tropical versions of the knapsack problem and the subset sum problem.

\begin{problem}\label{TropicalKnapsack}
    Given matrices $W_1, \ldots, W_n,$ and $C$ of size $k \times k$ over a tropical semiring, decide if $W_1^{\otimes x_1} \otimes \ldots \otimes  W_n^{\otimes x_n} = C$ for some non-negative integers $x_1, \ldots, x_n$.
\end{problem}

\begin{problem}\label{TropicalSubsetSum}
    Given matrices $W_1, \ldots, W_n$, and $C$ of size $k \times k$ over a tropical semiring, decide if $W_1^{\otimes x_1} \otimes \ldots \otimes  W_n^{\otimes x_n} = C$ for some $x_1, \ldots, x_n \in \{0, 1\}$.
\end{problem}

In this paper, we consider two algebraic structures: the set of matrices with non-negative integer entries of size $k\times k$ over the max-plus algebra with matrix multiplication and the set of matrices with positive integer entries of size $k\times k$ over the max-times algebra with matrix multiplication. 
We denote these structures by $\MP$ and $\MT$ respectively.

In this paper, we prove Theorem \ref{Th11} about the $\NP$-completeness of these four problems $\SSMPP$, $\SSMTP$, $\KMPP$, and $\KMTP$.
Theorem \ref{Th2} shows that there exist pseudo-polynomial algorithms to solve these problems. 
Also, we prove that there exist polynomial generic algorithms for $\SSMTP$ and $\KMTP$ (see Theorem~\ref{Th3}).

The remainder of this paper is structured into five parts. In Section 2, we recall some definitions from tropical algebra and generic-case complexity theory. Section 3 contains the proof of Theorem~\ref{Th11}. In Section 4, we prove Theorem~\ref{Th2}.
The next section contains the proof of Theorem~\ref{Th3}.
The final section gives a conclusion to our work and a list of open questions.

\section{Preliminaries}

In the first part of this section, we discuss tropical semirings and tropical matrix semirings, paying attention to the max-plus and max-times matrix algebras. In the second part of the section, we discuss how the size of an instance of a problem can be defined. 
In the last part, we recall the definition of the asymptotic density of a set and the definition of a generic algorithm.

Let $S$ be a set, and $a$ be a number. In this paper, we denote the set $\{x \in S : x \geq a\}$ by $S_{\geq a}$ and the set $\{x \in S : x > a\}$ by $S_{> a}$. For example, $\mathbb{Z}_{\geq 0}$ is the set of all non-negative integers, and $\mathbb{Z}_{\geq 1}$ is the set of all positive integers.
Also, we denote $S \cup \{\infty\}$ by $\overline{S}$ and $S \cup \{-\infty\}$ by $\underline{S}$.

A \textit{semiring} is a ring without the requirement that each element must have an additive inverse.
Two famous examples of semirings are the max-plus algebra and min-plus algebra. The \textit{max-plus algebra} is the set $\underline{\mathbb{R}}$ equipped with the operations 
$$
    x \oplus y = \max(x, y) \quad \text{and} \quad x \otimes y = x + y.
$$ 
The \textit{min-plus algebra} is the set $\overline{\mathbb{R}}$ equipped with the operations 
$$
    x \oplus y = \min(x, y) \quad \text{and} \quad  x \otimes y = x + y.
$$
These semirings are idempotent and commutative.
These two semirings are known as tropical algebras. They have been widely studied and have many applications. For more information, we refer the reader to~\cite{Butkovic2010}.

Some researchers studied the min-times and max-times algebras, where one of the operations is the multiplication of numbers, and the other is either $\min$ or $\max$~\cite{Shitov2016, SulandraIsnia2021, Durcheva2014, GaubertSergeev2008}. The domain of the \textit{max-times} algebra is $\mathbb{R}_{\geq 0}$, and the operations are 
$$
    x \oplus y = \max(x, y) \quad \text{and} \quad x \otimes y = x \cdot y.
$$
The domain of the \textit{min-times} algebra is $\overline{\mathbb{R}}_{> 0}$ and the operations are 
$$
    x \oplus y = \min(x, y) \quad \text{and} \quad x \otimes y = x \cdot y.
$$

One can use other sets closed under $\oplus$ and $\otimes$ as domains to define similar structures. For example, in case of the max-plus algebra, one can consider $\mathbb{Z}$ or $\mathbb{Z}_{\geq 0}$ instead of $\underline{\mathbb{R}}$.

We will need the following algebras: $\ZP = \left<\mathbb{Z}_{\geq 0}, {+}\right>$, $\ZT = \left<\mathbb{Z}_{\geq 1}, {\cdot}\right>$, $\ZMP = \left<\mathbb{Z}_{\geq 0}, \oplus, \otimes\right>$, where $\oplus$ is $\max$ and $\otimes$ is ${+}$, and  $\ZMT = \left<\mathbb{Z}_{\geq 1}, \oplus, \otimes\right>$, where $\oplus$ is $\max$ and $\otimes$ is ${\cdot}$.

Let $S$ be a set. We denote the set of matrices of size $k \times k$ with entries in $S$ by $\Mat_k(S)$. 
In this paper, for a matrix $A$, we often use $a_{ij}$ to refer to the element at the $i$-th row and the $j$-th column of the matrix $A$.

Let $\mathcal{S} = \left\langle S, \oplus, \otimes\right\rangle$ be an algebra with two binary operations, and let $k$ be a positive integer. The set $\Mat_k(S)$ can be equipped with addition $\oplus$ and multiplication $\otimes$ using the following formulae:
$$
    (A \oplus B)_{ij} = a_{ij} \oplus b_{ij},
$$
$$
    (A \otimes B)_{ij} = a_{i1} \otimes b_{1j} \oplus \dots \oplus a_{ik} \otimes b_{kj}.
$$
If $\mathcal{S}$ is an idempotent semiring, then the obtained algebra is also an idempotent semiring.

We denote an element $a$ of the semiring $\mathcal{S}$ raised to the $n$-th power by $a^{\otimes n}$. 

In this paper, we consider the following two algebras:
$\MP = \left<\Mat_k(\mathbb{Z}_{\geq 0}), \otimes\right>$, where $\otimes$ is defined using the maximum and the addition of numbers, and $\MT = \left<\Mat_k(\mathbb{Z}_{\geq 1}), \otimes\right>$, where $\otimes$ is defined using the maximum and the multiplication of numbers.

Note that these semigroups do not have the identity matrices. 
If $x_i = 0$ in an expression $W_1^{\otimes x_1} \otimes \ldots \otimes  W_n^{\otimes x_n}$, 
then it means that the matrix $W_i$ is absent in the product.

We have a partial order on $\Mat_k(\mathbb{Z}_{{\geq 0}})$ defined by the following rule:
$$
    A \leq B \text{ iff } a_{ij} \leq b_{ij} \text{ for all } i, j \in \{1, \ldots, k\}.
$$

Let us describe how to solve a linear matrix equation
\begin{equation}\label{MatLinEq}
A \otimes X = B
\end{equation}over
$\MP$ and $\MT$.

We denote the set of all solutions to an equation $F(X_1, \ldots, X_n) = G(X_1, \ldots, X_n)$
over $\mathcal{S}$ by $\V_{\mathcal{S}}(F(X_1, \ldots, X_n) = G(X_1, \ldots, X_n))$ or 
just by $\V(F(X_1, \ldots, X_n) = G(X_1, \ldots, X_n))$ if this does not cause misunderstandings.

Every linear matrix equation~(\ref{MatLinEq}) can be rewritten as a one-sided system of linear equations:
$$
    \begin{array}{lll}
        a_{11} \otimes x_{11} \oplus a_{12} \otimes x_{21} \oplus ... \oplus a_{1k} \otimes x_{k1}               & = & b_{11}, \\
        \qquad a_{11} \otimes x_{12} \oplus a_{12} \otimes x_{22} \oplus ... \oplus a_{1k} \otimes x_{k2}        & = & b_{12}, \\
        \hdotsfor{3} \\
        \qquad\qquad a_{11} \otimes x_{1k} \oplus a_{12} \otimes x_{2k} \oplus ... \oplus a_{1k} \otimes x_{kk}  & = & b_{1k}, \\
        a_{21} \otimes x_{11} \oplus a_{22} \otimes x_{21} \oplus ... \oplus a_{2k} \otimes x_{k1}               & = & b_{21}, \\
        \qquad a_{21} \otimes x_{12} \oplus a_{22} \otimes x_{22} \oplus ... \oplus a_{2k} \otimes x_{k2}        & = & b_{22}, \\
        \hdotsfor{3} \\
        \qquad\qquad a_{21} \otimes x_{1k} \oplus a_{22} \otimes x_{2k} \oplus ... \oplus a_{2k} \otimes x_{kk}  & = & b_{2k}, \\
        \hdotsfor{3}\\
        a_{k1} \otimes x_{11} \oplus a_{n2} \otimes x_{21} \oplus ... \oplus a_{kk} \otimes x_{k1}               & = & b_{k1}, \\
        \qquad a_{k1} \otimes x_{12} \oplus a_{k2} \otimes x_{22} \oplus ... \oplus a_{kk} \otimes x_{k2}        & = & b_{k2}, \\
        \hdotsfor{3} \\
        \qquad\qquad a_{k1} \otimes x_{1k} \oplus a_{k2} \otimes x_{2k} \oplus ... \oplus a_{kk} \otimes x_{kk}  & = & b_{kk}.
    \end{array}
$$

The matrix~$X^{*} = X^{*}(A, B)$ such that $x^{*}_{ij} = \min\{b_{lj} \otimes a_{li}^{\otimes {-1}}\}_{l}$ is called the \textit{principal solution} to the equation~(\ref{MatLinEq}).
Note that $x^{*}_{ij} = \min\{b_{lj} - a_{li}\}_{l}$ for $\MP$, and $x^{*}_{ij} = \min\{b_{lj} / a_{li}\}_{l}$ for $\MT$.
Actually, the principal solution may not be a solution to~(\ref{MatLinEq}).
The following lemma is an adaptation of Theorem 3.1.1 in~\cite{Butkovic2010}. 

\begin{lemma}\label{Lemma10}
    Let $\mathcal{S}$ be $\MP$ or $\MT$.
    Let $A, B, X$ be matrices of size $k \times k$. 
    $X$ is a solution to $A \otimes X = B$ over $\mathcal{S}$ if and only if $X \leq X^{*}(A, B)$ and 
    $$
        \bigcup_{(i, j) \;:\; x_{ij} = x^{*}_{ij}} M_{ij}(A, B) = \{1, \ldots, k\} \times \{1, \ldots, k\},
    $$
    where $M_{ij}(A, B) = \{(l, j) : x^{*}_{ij} = b_{lj} \otimes a_{li}^{\otimes {-1}}\}$.
\end{lemma}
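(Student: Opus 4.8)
The plan is to unfold the matrix equation $A \otimes X = B$ into its $k^2$ scalar equations and to analyze each one using the residuation (adjunction) inequality that governs both $\MP$ and $\MT$. Reading off the $(l,j)$-entry of $A \otimes X = B$ gives $\bigoplus_{m} a_{lm} \otimes x_{mj} = b_{lj}$, that is, $\max_m (a_{lm} \otimes x_{mj}) = b_{lj}$, where $\otimes$ is $+$ for $\MP$ and $\cdot$ for $\MT$. I would carry out the argument for $\MP$ in full and then observe that the $\MT$ case follows verbatim upon replacing $+$ and $-$ by $\cdot$ and division; this substitution is legitimate because all entries of a matrix over $\MT$ lie in $\mathbb{Z}_{\geq 1}$, so every $a_{li}^{\otimes -1}$ is well defined and multiplication is order-preserving on the positive reals, exactly as addition is order-preserving on the reals.

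First I would establish that $X \leq X^{*}(A,B)$ is necessary. If $X$ is a solution, fix a pair $(i,j)$; then for each row index $l$ the $m=i$ summand of the $(l,j)$-equation satisfies $a_{li} \otimes x_{ij} \leq b_{lj}$, which rearranges by residuation to $x_{ij} \leq b_{lj} \otimes a_{li}^{\otimes -1}$. Since this holds for every $l$, taking the minimum over $l$ yields $x_{ij} \leq \min_l \{b_{lj} \otimes a_{li}^{\otimes -1}\} = x^{*}_{ij}$, i.e. $X \leq X^{*}(A,B)$. The same computation, read in reverse, shows that any $X \leq X^{*}(A,B)$ automatically satisfies $\max_m (a_{lm} \otimes x_{mj}) \leq b_{lj}$ for every $(l,j)$, so for such $X$ the equation reduces to checking that each maximum is actually attained.

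The core of the proof is the equality analysis, which is what connects attainment to the sets $M_{ij}(A,B)$. Fix $(l,j)$ and assume $X \leq X^{*}(A,B)$. A summand meets the bound, $a_{li} \otimes x_{ij} = b_{lj}$, exactly when $x_{ij} = b_{lj} \otimes a_{li}^{\otimes -1}$; but $b_{lj} \otimes a_{li}^{\otimes -1} \geq x^{*}_{ij} \geq x_{ij}$, so this equality forces simultaneously $x_{ij} = x^{*}_{ij}$ and $x^{*}_{ij} = b_{lj} \otimes a_{li}^{\otimes -1}$, i.e. $(l,j) \in M_{ij}(A,B)$. Conversely, if some $i$ satisfies $x_{ij} = x^{*}_{ij}$ and $(l,j) \in M_{ij}(A,B)$, then $a_{li} \otimes x_{ij} = a_{li} \otimes (b_{lj} \otimes a_{li}^{\otimes -1}) = b_{lj}$ and the maximum is attained. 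Hence, under $X \leq X^{*}(A,B)$, the equation $(A \otimes X)_{lj} = b_{lj}$ holds if and only if $(l,j)$ lies in $\bigcup_{(i,j)\,:\,x_{ij} = x^{*}_{ij}} M_{ij}(A,B)$. Demanding that all $k^2$ equations hold is precisely the stated covering condition, and combining the necessity direction (a solution forces $X \leq X^{*}$ together with full coverage) with the sufficiency direction (these two conditions make every maximum attained) finishes the proof.

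I expect the main obstacle to be bookkeeping rather than anything conceptual: one must keep straight which index is summed over and which is fixed at the three places where the principal solution, the scalar equations, and the sets $M_{ij}(A,B)$ interact, noting in particular that $M_{ij}(A,B)$ records only pairs whose second coordinate is $j$ while the minimum defining $x^{*}_{ij}$ ranges over the first coordinate $l$. It is worth remarking that the whole argument lives at the level of scalar entries and uses nothing beyond the identity $a \otimes x \leq b \iff x \leq b \otimes a^{\otimes -1}$, so the absence of a multiplicative identity matrix in $\MP$ and $\MT$ never intervenes.
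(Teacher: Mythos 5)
Your proof is correct, and it is exactly the standard residuation argument behind Theorem~3.1.1 of Butkovi\'c, which is all the paper itself offers here (it states the lemma as an adaptation of that theorem and gives no proof). Nothing further is needed; the one point worth keeping in mind, which you implicitly handle, is that over $\MT$ the principal solution $x^{*}_{ij}=\min_l\{b_{lj}/a_{li}\}$ need not be an integer, but the comparison $X\leq X^{*}$ and the equality analysis go through unchanged.
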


This lemma gives us a way to enumerate all the solution to $A \otimes X = B$.

Now, let us define the size of an integer, a matrix, and an instance of a problem.

Let $a$ be a non-negative number.
We can consider two sizes of $a$. The first one is the number of bits that we need to write this number using the binary numeral system:
$$
    \size_2(a) = 
    \left\{
    \begin{array}{ll}
        \lfloor \log_2(a) \rfloor + 1 & \text{ if } a > 0, \\
        0                             & \text{otherwise}.
    \end{array}
    \right.
$$

For $a \geq 1$, if $\size(a) = s$, then $2^{s - 1}\leq a < 2^s$. Therefore, 
\begin{equation}\label{NumSize}
    |\{a \in \mathbb{Z}_{\geq 1} : \size_2(a) = s \}| = 2^{s - 1}.
\end{equation}

Note that for positive integers $a, b$,
\begin{equation}\label{sizeOfProduct}
    \size_2(a) + \size_2(b) - 1 \leq \size_2(ab) \leq \size_2(a) + \mathrm{size}(b).
\end{equation}

Indeed, let $\size_2(a) = s$ and $\size_2(b) = t$. Then $2^{s-1} \leq a < 2^s$ and $2^{t-1} \leq b < 2^t$. Therefore, $2^{s - 1 + t - 1} \leq ab < 2^{s+t}$. This implies $s + t - 1 \leq\size_2(ab) \leq s + t$.

The second size is the number of symbols that we need to write this number using the unary numeral system:
$$
    \size_1(a) = a.
$$
 
Let $A$ be a matrix of size $k \times k$.
The matrix can be represented as a list of $k^2$ numbers separated with $k^2 - 1$ delimiters.
Therefore,
\begin{equation}\label{sizeMatrix}
    \size_b(A) =  \sum_{i = 1}^k\sum_{j = 1}^k \size_b(a_{ij}) + k^2 - 1.
\end{equation}

Let $I = (w_1, \ldots, w_n, c)$ be an input of $\mathbf{SSP}(\mathcal{S})$ or $\mathbf{KP}(\mathcal{S})$. 
Then the size of $I$ is 
$$
    \size_b(I) = \sum_{i = 1}^n \size_b(w_i) + \size_b(c) + n.
$$

If $\mathcal{S}$ is a matrix semigroup, then the size of an input $I = (W_1, \ldots, W_n, C)$ can be rewritten as
$$
    \size_b(I) = \sum_{l = 1}^n \sum_{j = 1}^k\sum_{l = 1}^k \size_b(w_{lij}) + \sum_{i = 1}^k\sum_{j = 1}^k \size_b(c_{ij}) + (n + 1)k^2 - 1.
$$
Note that we consider the number $k$ as a parameter of a problem. Hence, $k$ can occur in exponents in polynomials.

Let $U$ be a set. \textit{A stratification} of $U$ is a sequence $\{U_m\}_m$ of non-empty finite subsets $U_m \subseteq U$ such that $\bigcup_m U_m = U$. 
For a subset $A \subseteq U$ and a stratification~$\{U_m\}_m$, the limit $$\rho(A) = \lim_{m \to \infty} \frac{|A \cap U_m|}{|U_m|}$$ (if it exists) is called the \textit{asymptotic density} of $A$ with respect to the stratification~$\{U_m\}_m$.
If $\rho(A) = 1$, we say that $A$ is \textit{generic}.
If $\rho(A) = 0$, we say that $A$ is \textit{negligible}.

An algorithm $\mathcal{A}\colon U \to V \cup \{?\}$ is called \textit{generic} if $\mathcal{A}$ stops on every input $I \in U$, and $\{I \in U : \mathcal{A}(I) \neq {?}\}$ is a generic set.
Here, the answer ``?'' means ``don't know''.

A decision problem $A \subseteq U$ is \textit{decidable generically in polynomial time} if there is a polynomial generic algorithm computing the indicator function of $A$.

\section{\texorpdfstring{$\NP$}{NP}-completeness of the problems}

In this section, we prove the theorems about the $\NP$-completeness of $\SSMPP$, $\SSMTP$, $\KMPP$, and $\KMTP$.
In order to do this, we need the following lemmas.

\begin{lemma}\label{Lemma11}
    $\SSMPP$ is in $\NP$.
\end{lemma}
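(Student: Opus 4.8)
The plan is to show that $\SSMPP$ admits a polynomially checkable certificate, which is the standard route to membership in $\NP$. Given an instance $I = (W_1, \ldots, W_n, C)$ of $\SSMPP$, a candidate certificate is simply the tuple of exponents $(x_1, \ldots, x_n) \in \{0, 1\}^n$ witnessing $W_1^{\otimes x_1} \otimes \ldots \otimes W_n^{\otimes x_n} = C$. Such a certificate has length $n$, which is at most $\size_2(I)$, so it is polynomial in the input size. The verification procedure multiplies together, in order, exactly those matrices $W_i$ for which $x_i = 1$, obtaining a matrix $P$, and then tests whether $P = C$ entrywise.

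First I would bound the cost of forming the product $P$. Since each $x_i \in \{0, 1\}$, the product is over at most $n$ of the input matrices, and it is computed by at most $n - 1$ max-plus matrix multiplications; each such multiplication uses $O(k^3)$ additions and comparisons of integers. The only thing that could break polynomiality is a blow-up in the size of the intermediate entries, so this is the step that needs care.

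The key observation is that in the max-plus algebra entries add rather than multiply: since $(A \otimes B)_{ij} = \max_{l}(a_{il} + b_{lj})$, if the entries of $A$ are at most $M_A$ and the entries of $B$ are at most $M_B$, then the entries of $A \otimes B$ are at most $M_A + M_B$. Consequently, every entry of a product of at most $n$ of the matrices $W_1, \ldots, W_n$ is bounded by $\sum_{i=1}^n M_i$, where $M_i = \max_{p, q} w_{ipq}$. Hence every intermediate entry has value at most $\sum_i M_i \le n \cdot \max_i M_i$, so its binary size is at most $\log_2 n + \max_i \size_2(M_i) + 1$, which is polynomial in $\size_2(I)$. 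Therefore each of the $O(n k^3)$ arithmetic operations runs in polynomial time, and so does the final entrywise comparison with $C$; the whole verification is polynomial.

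Finally, I would dispose of the degenerate case in which all $x_i = 0$: by the convention that a matrix with exponent $0$ is absent from the product, such a certificate yields no matrix and can be rejected outright (equivalently, one requires at least one $x_i = 1$). Combining the polynomial certificate length with the polynomial verification time gives $\SSMPP \in \NP$. I do not expect any real obstacle beyond the entry-growth bound, which is immediate from the additive nature of max-plus multiplication; the same argument would work verbatim for $\SSMTP$ once the additive bound on entries is replaced by the multiplicative bound $\size_2(ab) \le \size_2(a) + \size_2(b)$ coming from~(\ref{sizeOfProduct}).
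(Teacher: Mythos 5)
Your proof is correct and takes essentially the same route as the paper: exhibit the exponent vector $(x_1,\ldots,x_n)\in\{0,1\}^n$ as a polynomial-size certificate and verify the product equality in polynomial time. The paper simply asserts that the check is polynomial, whereas you additionally justify it via the additive entry-growth bound for max-plus products; this is a welcome elaboration, not a different approach.
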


\begin{proof}
    To prove that a problem is in $\NP$, we need to show that for every yes-instance $I$, there is a certificate of size polynomial in~$\size_2(I)$ that can be checked in time polynomial in~$\size_2(I)$. 

    It is easy to see that $\SSMPP$ is in $\NP$. 
    Indeed, let $I = (W_1, \ldots, W_n, C)$ be an input of the problem. A vector $(x_1, \ldots, x_n) \in \{0, 1\}^n$ such that 
    \begin{equation}\label{WWWEqC}
        W_1^{\otimes x_1} \otimes \ldots \otimes  W_n^{\otimes x_n} = C
    \end{equation}
    is a certificate. The size of this vector is polynomial in $\size_2(I)$. Also, the equality (\ref{WWWEqC}) can be checked in time polynomial in $\size_2(I)$. 
\end{proof}

\begin{lemma}\label{Lemma12}
    $\SSMTP$ is in $\NP$.
\end{lemma}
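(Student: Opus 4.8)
The plan is to mimic the proof of Lemma~\ref{Lemma11} almost verbatim, using as the certificate for a yes-instance $I = (W_1, \ldots, W_n, C)$ the Boolean vector $(x_1, \ldots, x_n) \in \{0, 1\}^n$ witnessing
$$
    W_1^{\otimes x_1} \otimes \ldots \otimes  W_n^{\otimes x_n} = C.
$$
This vector has $n$ bits, so its size is clearly polynomial in $\size_2(I)$. What remains is to argue that the verification --- computing the product on the left and comparing it entrywise with $C$ --- runs in time polynomial in $\size_2(I)$.

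The one point where the max-times case genuinely differs from the max-plus case is the growth of the entries of the intermediate products. In $\MP$ the entries of a product are maxima of \emph{sums}, so their bit size stays small automatically; in $\MT$ the entries are maxima of \emph{products}, and a product of several input entries can be as large as a high power of the largest entry. First I would bound this growth. By the inequality~(\ref{sizeOfProduct}), for positive integers the bit size of a product is at most the sum of the bit sizes of the factors. Hence each entry of $W_1^{\otimes x_1} \otimes \ldots \otimes  W_n^{\otimes x_n}$, being a maximum of products of at most $n$ input entries (one drawn from each factor that is present), has bit size at most $n \cdot M$, where $M$ is the largest bit size occurring among the entries of $W_1, \ldots, W_n$. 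Since $n \leq \size_2(I)$ and $M \leq \size_2(I)$, we get $n \cdot M \leq \size_2(I)^2$, so every number arising during the evaluation of the left-hand side has bit size polynomial in $\size_2(I)$.

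With this bound in hand, the verification is routine: the product is formed by at most $n$ matrix multiplications, each of which is $O(k^3)$ scalar operations (multiplications and comparisons) on integers of bit size $O(\size_2(I)^2)$, and all such scalar operations run in time polynomial in the operand bit sizes; the final entrywise comparison with $C$ is likewise polynomial. Therefore the certificate can be checked in time polynomial in $\size_2(I)$, and $\SSMTP$ is in $\NP$. I do not expect any real obstacle beyond the entry-growth bookkeeping just described, which is the sole substantive difference from Lemma~\ref{Lemma11}; the combinatorial content of the certificate argument is identical.
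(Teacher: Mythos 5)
Your proof is correct and follows the same route as the paper, which simply states that the argument is ``similar to the proof of Lemma~\ref{Lemma11}.'' You have in fact done more than the paper: you correctly identify and handle the one point where the max-times case is not literally identical to the max-plus case, namely bounding the bit size of the entries of the intermediate products via the inequality~(\ref{sizeOfProduct}), which is exactly the bookkeeping the paper's one-line proof leaves implicit.
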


\begin{proof}
    The proof is similar to the proof of Lemma~\ref{Lemma11}.
\end{proof}

\begin{remark}\label{Remark1}
Let $A = (a_{ij})$ and $B = (b_{ij})$ be matrices in~$\Mat_k(\mathbb{Z}_{{\geq 0}})$.
Consider $\MP$. Note that 
$$(A \otimes B)_{ij} = a_{i1} \otimes b_{1j} \oplus \dots \oplus a_{ik} \otimes b_{kj} \geq a_{ij}.$$ 
Therefore, the sequence $A, A^{\otimes 2}, A^{\otimes 2}, \ldots$ is non-decreasing. Moreover, if $A \neq O$, then the maximal $x$ such that $A^{\otimes x} \leq B$ cannot be greater than $2 \cdot \max\{b_{ij}\}_{ij}$. 
\end{remark}

\begin{remark}\label{Remark2}
Consider $\MT$. Similarly, if $A = (a_{ij})$ is in $\Mat_k(\mathbb{Z}_{{\geq 1}})$, then the sequence 
$A, A^{\otimes 2}, A^{\otimes 2}, \ldots$ is non-decreasing. Moreover, if there is $a_{ij} > 1$, then
the maximal $x$ such that $A^{\otimes x} \leq B$ cannot be greater than $2 \cdot \max\{b_{ij}\}_{ij}$ as well.
\end{remark}

\begin{lemma}\label{Lemma13}
$\KMPP$ is in $\NP$.
\end{lemma}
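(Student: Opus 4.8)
The plan is to exhibit, for every yes-instance $I = (W_1, \ldots, W_n, C)$, a short certificate that can be verified quickly, just as in Lemma~\ref{Lemma11}; the only new difficulty compared with $\SSMPP$ is that the exponents $x_1, \ldots, x_n$ now range over $\mathbb{Z}_{\geq 0}$ rather than $\{0, 1\}$, so the exponent vector $(x_1, \ldots, x_n)$ is the natural candidate certificate, but we must bound the bit size of each $x_i$ and show that the defining equality can still be checked in polynomial time.

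First I would bound the exponents. Working in $\MP$, one checks that for $A, B \in \Mat_k(\mathbb{Z}_{\geq 0})$ we have $A \otimes B \geq A$ and $A \otimes B \geq B$ in the partial order, because $(A \otimes B)_{ij} \geq a_{ij} \otimes b_{jj} \geq a_{ij}$ and symmetrically $(A \otimes B)_{ij} \geq a_{ii} \otimes b_{ij} \geq b_{ij}$. Hence in any product $W_1^{\otimes x_1} \otimes \ldots \otimes W_n^{\otimes x_n} = C$ every factor satisfies $W_i^{\otimes x_i} \leq C$. If $W_i \neq O$, then Remark~\ref{Remark1} forces $x_i \leq 2 \cdot \max\{c_{lm}\}_{lm}$; if $W_i = O$, then $W_i^{\otimes x} = O$ for every $x \geq 1$, so in any solution we may replace a positive $x_i$ by $1$ without changing the product. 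Consequently, whenever a solution exists there is one with $x_i \leq 2 \cdot \max\{c_{lm}\}_{lm}$ for all $i$, so $\size_2(x_i) = O(\size_2(C))$ and the total size $\sum_{i} \size_2(x_i)$ of the certificate is polynomial in $\size_2(I)$.

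Next I would argue that the equality $W_1^{\otimes x_1} \otimes \ldots \otimes W_n^{\otimes x_n} = C$ can be verified in time polynomial in $\size_2(I)$. The subtlety, and the main obstacle, is that each bounded exponent $x_i$ may still be exponentially large in $\size_2(I)$, so computing $W_i^{\otimes x_i}$ by iterated multiplication is too slow; instead I would use repeated squaring, which needs only $O(\log x_i)$ matrix multiplications. What makes this work is that in the max-plus algebra the entries grow only linearly: $(W_i^{\otimes x_i})_{jl} \leq x_i \cdot \max\{(W_i)_{pq}\}_{pq}$, so every entry appearing during the computation has bit size $O(\size_2(I))$, and likewise the entries of the full product of the $n$ powers remain polynomially bounded once the exponents are capped at the bound above. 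Each max-plus matrix multiplication then costs $O(k^3)$ additions and comparisons of numbers of polynomial bit size, and only polynomially many such multiplications are performed, so the whole check runs in polynomial time. Combining the bound on the certificate with the efficiency of the verification shows $\KMPP \in \NP$.
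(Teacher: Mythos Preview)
Your proposal is correct and follows the same approach as the paper's proof: the exponent vector serves as the certificate, Remark~\ref{Remark1} (together with the $W_i = O$ case handled separately) bounds each $x_i$ so that the certificate has polynomial bit size, and verification is polynomial because intermediate entries stay polynomially bounded. You are more explicit than the paper on two points---you spell out the monotonicity $A \otimes B \geq A, B$ needed to conclude $W_i^{\otimes x_i} \leq C$, and you name repeated squaring as the exponentiation method---whereas the paper simply asserts these and also uses the slightly tighter observation that all intermediate entries are bounded by $\max\{c_{ij}\}_{ij}$ rather than your looser product bound; both versions are fine.
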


\begin{proof}
Let $I = (W_1, \ldots, W_n, C)$ be an input of the problem.
To prove that $\KMPP$ is in $\NP$, let us demonstrate that if~(\ref{WWWEqC}) has a solution, then there is a solution of size polynomial in $\size_2(I)$.

If $W_i = O$, then without loss of generality we may assume that the corresponding $x_i$ is less than or equal to 1.
Using Remark~\ref{Remark1}, we get that the size of each $x_i$ is bounded by $2\size_2(I)$.
Therefore, the size of the solution is bounded by $2\size_2(I)^2$, which is a polynomial in $\size_2(I)$.

Since tropical matrix multiplication and tropical matrix exponentiation can be performed in time polynomial in $\size_2(I)$ and all the 
intermediate numbers during computing cannot be greater than $\max\{c_{ij}\}_{ij}$, the problem is in $\NP$.
\end{proof}

\begin{lemma}\label{Lemma14}
$\KMTP$ is in $\NP$.
\end{lemma}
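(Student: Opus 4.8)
The plan is to follow the structure of the proof of Lemma~\ref{Lemma13}, replacing the additive bound of Remark~\ref{Remark1} by the multiplicative bound of Remark~\ref{Remark2}, and replacing the degenerate matrix $O$ by the degenerate matrix whose entries are all equal to~$1$. Let $I = (W_1, \ldots, W_n, C)$ be an input, and suppose (\ref{WWWEqC}) has a solution. The goal is to exhibit a solution whose size is polynomial in $\size_2(I)$ and which can be verified in time polynomial in $\size_2(I)$.

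First I would bound the exponents. Over $\MT$ the only matrix $W$ with $\max\{w_{ij}\}_{ij} = 1$ is the all-ones matrix, and such a matrix satisfies $W^{\otimes x} = W$ for every $x \geq 1$; so if some $W_i$ is this matrix, we may assume $x_i \leq 1$ without loss of generality. For every other $W_i$ there is an entry strictly greater than $1$, so Remark~\ref{Remark2} applies and bounds the largest admissible exponent by $2 \cdot \max\{c_{ij}\}_{ij}$. Since $\size_2(2 \cdot \max\{c_{ij}\}_{ij}) \leq \size_2(I) + 1$, each $x_i$ has binary size linear in $\size_2(I)$, and the whole vector $(x_1, \ldots, x_n)$ has size polynomial in $\size_2(I)$, as required for the certificate.

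The part that needs the most care, and the genuine obstacle here, is showing that (\ref{WWWEqC}) can be verified in polynomial time, because over the max-times algebra the entries of a power $W_i^{\otimes x_i}$ could a priori be as large as $(\max\{w_{ij}\}_{ij})^{x_i}$, whose binary size is exponential in $\size_2(I)$. The key observation that rules this out is the monotonicity already recorded in Remark~\ref{Remark2}: since every matrix in the product has entries at least $1$, we have $P \otimes Q \geq P$ entrywise whenever $Q \in \Mat_k(\mathbb{Z}_{\geq 1})$. Consequently every prefix $W_1^{\otimes x_1} \otimes \ldots \otimes W_j^{\otimes x_j}$ satisfies $\leq C$, each factor satisfies $W_i^{\otimes x_i} \leq C$, and $W_i^{\otimes a} \leq W_i^{\otimes b} \leq C$ for all $a \leq b \leq x_i$. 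Hence, if each $W_i^{\otimes x_i}$ is computed by repeated squaring, every intermediate matrix that arises — each power $W_i^{\otimes 2^j}$ with $2^j \leq x_i$ and each partial product used to assemble $W_i^{\otimes x_i}$ — is bounded above by $C$, so all entries occurring during the computation are at most $\max\{c_{ij}\}_{ij}$ and therefore have binary size at most $\size_2(I)$.

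Finally, repeated squaring uses $O(\log x_i) = O(\size_2(I))$ multiplications per factor, each of which multiplies two $k \times k$ matrices with polynomially bounded entries in polynomial time, and the same bound controls the left-to-right assembly of the $n$ factors, since every prefix stays $\leq C$. Thus both the size of the certificate and the cost of checking (\ref{WWWEqC}) are polynomial in $\size_2(I)$, which establishes that $\KMTP$ is in $\NP$. The only real difficulty is the potential entry-size blow-up in the multiplicative setting, and it is defused entirely by the monotonicity bound $W_i^{\otimes x_i} \leq C$.
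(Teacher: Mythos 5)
Your proof is correct and follows essentially the same route as the paper, which simply says the argument is that of Lemma~\ref{Lemma13} with Remark~\ref{Remark2} in place of Remark~\ref{Remark1}. You additionally spell out the point the paper leaves implicit --- that monotonicity ($W_i^{\otimes x_i} \leq C$ and likewise for all intermediate powers and prefixes) prevents the multiplicative entry blow-up during verification --- which is a welcome clarification but not a different approach.
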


\begin{proof}
Taking into account Remark~\ref{Remark2},
the proof is similar to the proof of Lemma~\ref{Lemma13}.
\end{proof}

\begin{lemma}\label{Lemma21}
$\SSMPP$ is $\NP$-hard.
\end{lemma}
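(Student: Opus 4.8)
The plan is to give a polynomial-time many-one reduction from the classical subset sum problem $\SSP$ to $\SSMPP$. Since $\SSP$ is well known to be $\NP$-complete, this establishes the $\NP$-hardness of $\SSMPP$. First I would recall that $\SSP$ remains $\NP$-complete when restricted to instances $(w_1, \ldots, w_n, c)$ with $c \geq 1$ and every $w_i \geq 1$; working with this restricted version lets me sidestep the degenerate empty product. This matters because $\MP$ has no identity matrix, so a product $W_1^{\otimes x_1} \otimes \ldots \otimes W_n^{\otimes x_n}$ is only meaningful when at least one $x_i = 1$.

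Given such an instance, I would build an instance $(W_1, \ldots, W_n, C)$ of $\SSMPP$ by letting $W_i$ be the $k \times k$ matrix all of whose entries equal $w_i$, and $C$ the $k \times k$ matrix all of whose entries equal $c$. Each produced entry is one of the given numbers and there are $(n+1)k^2$ of them, so the construction is computable in time polynomial in $\size_2(I)$, where $I$ is the original input.

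The key step is the observation that constant matrices are closed under $\otimes$ and multiply additively: if $A$ has every entry equal to $a$ and $B$ has every entry equal to $b$, then $(A \otimes B)_{ij} = \max_{l}(a + b) = a + b$, so $A \otimes B$ is the constant matrix with entry $a + b$. An immediate induction then shows that for any nonempty $S = \{i_1 < \cdots < i_m\} \subseteq \{1, \ldots, n\}$ the product $W_{i_1} \otimes \cdots \otimes W_{i_m}$ is the constant matrix with every entry equal to $\sum_{i \in S} w_i$. Consequently a vector $(x_1, \ldots, x_n) \in \{0,1\}^n$ that is not all zero satisfies $W_1^{\otimes x_1} \otimes \ldots \otimes W_n^{\otimes x_n} = C$ if and only if $\sum_{i\,:\,x_i = 1} w_i = c$.

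From this equivalence correctness follows in both directions: a nonempty subset of the $w_i$ summing to $c$ yields a solution of the matrix equation, and conversely any solution of the matrix equation, which necessarily uses at least one matrix, restricts to such a subset. Because we assumed $c \geq 1$, every yes-instance of $\SSP$ is witnessed by a nonempty subset, so discarding the empty product loses no solutions. I expect the only genuinely delicate point to be exactly this bookkeeping around the absent identity element; the algebraic core, that products of constant matrices track sums of their entries, is a one-line computation valid for every $k \geq 1$, so the reduction in fact yields $\NP$-hardness for each fixed parameter $k$.
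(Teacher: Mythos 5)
Your reduction is exactly the one in the paper: map each weight $a$ to the constant $k\times k$ matrix with every entry $a$, observe that such matrices multiply additively under $\otimes$, and conclude that $0$--$1$ solutions of the matrix equation correspond to subsets summing to $c$. The extra bookkeeping you do around the empty product (restricting to $c\geq 1$, $w_i\geq 1$) is a reasonable way to handle the convention, stated elsewhere in the paper, that the all-zero vector is never a solution since $\MP$ has no identity matrix; otherwise the argument is the same.
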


\begin{proof}
Let us demonstrate that we can reduce in polynomial time the subset sum problem for non-negative integers $\SSP$ to $\SSMPP$. Note that $\SSP$ is $\NP$-complete~\cite[Problem SP13]{GareyJohnson1979}, \cite[Problem 18 (Knapsack)]{Karp1972}.
Define a function $f\colon \mathbb{Z}_{\geq 0} \to \Mat_k(\mathbb{Z}_{{\geq 0}})$ as follows:
\begin{equation}\label{FDef}
f(a) = 
\begin{pmatrix}
a      & \cdots & a     \\
\vdots & \ddots & \vdots \\
a      & \cdots & a      
\end{pmatrix}
.
\end{equation}

Let $(w_1, \ldots, w_n, c)$ be an input of $\SSP$;
then, we consider $(f(w_1), \ldots, f(w_n), f(c))$ as an input for $\SSMPP$.

Note that $x_1 a_1 + \ldots + x_n a_n = c$ if and only if
$f(a_1)^{\otimes x_1} \otimes \ldots \otimes f(a_n)^{\otimes x_n} = f(c)$ because
$$
f(a)^{\otimes x} = 
\begin{pmatrix}
a      & \cdots & a \\
\vdots & \ddots & \vdots \\
a      & \cdots & a      
\end{pmatrix}^{\!\!\otimes x} 
= 
\begin{pmatrix}
xa      & \cdots & xa \\
\vdots & \ddots & \vdots \\
xa      & \cdots & xa      
\end{pmatrix}
$$
and
\begin{equation*}
f(a) \otimes f(b) = 
\begin{pmatrix}
a      & \cdots & a \\
\vdots & \ddots & \vdots \\
a      & \cdots & a      
\end{pmatrix}
\otimes
\begin{pmatrix}
b      & \cdots & b \\
\vdots & \ddots & \vdots \\
b      & \cdots & b      
\end{pmatrix} 
=
\begin{pmatrix}
a+b      & \cdots & a+b \\
\vdots & \ddots & \vdots \\
a+b      & \cdots & a+b      
\end{pmatrix}.
\end{equation*}
Therefore, $\SSMPP$ is $\NP$-hard. 
\end{proof}

\begin{lemma}\label{Lemma22}
$\SSMTP$ is $\NP$-hard.
\end{lemma}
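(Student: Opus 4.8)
The plan is to mimic the reduction in Lemma~\ref{Lemma21}, but to replace the additive structure by a multiplicative one. The point is that in $\MT$ the map $f$ from~(\ref{FDef}) satisfies $f(a) \otimes f(b) = f(ab)$ and $f(a)^{\otimes x} = f(a^x)$, so a product of matrices of this form encodes a product of scalars rather than a sum. Hence the natural source problem is multiplicative. Exponentiating a binary-encoded subset sum instance (say $a \mapsto 2^a$) would blow up the input size, so instead I would reduce from the Exact Cover by 3-Sets problem $\XThreeC$, which is $\NP$-complete, encoding set membership through prime factorizations.

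Concretely, given an $\XThreeC$ instance with ground set $X = \{1, \ldots, 3q\}$ and a family $\mathcal{C} = \{C_1, \ldots, C_m\}$ of three-element subsets of $X$, I would assign to each element $i \in X$ the $i$-th prime $p_i$, and to each set $C_j = \{a, b, c\}$ the integer $w_j = p_a p_b p_c$. Setting $c = p_1 p_2 \cdots p_{3q}$, the reduction outputs the $\SSMTP$ instance $(f(w_1), \ldots, f(w_m), f(c))$. By the multiplicative behaviour of $f$, a vector $(x_1, \ldots, x_m) \in \{0, 1\}^m$ satisfies $f(w_1)^{\otimes x_1} \otimes \ldots \otimes f(w_m)^{\otimes x_m} = f(c)$ if and only if $\prod_{j \,:\, x_j = 1} w_j = c$. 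Since every $w_j \geq 2$ and $c \geq 2$, the constructed matrices indeed have entries in $\mathbb{Z}_{\geq 1}$, so they lie in $\Mat_k(\mathbb{Z}_{\geq 1})$.

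Correctness then rests on unique factorization. The selected product $\prod_{j \,:\, x_j = 1} w_j$ equals $c = \prod_i p_i$ exactly when each prime $p_i$ occurs with multiplicity one among the chosen sets, that is, when the chosen $C_j$ cover each element of $X$ exactly once. If two chosen sets overlap, the corresponding prime appears with multiplicity at least two; if an element is uncovered, its prime is absent. In either case the product differs from $c$. Therefore the chosen sets form an exact cover of $X$ precisely when the $\SSMTP$ equation holds, which yields the desired equivalence between the two instances.

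The main point to check is that the reduction runs in polynomial time. By the prime number theorem, $p_i = O(i \log i)$, so each $p_i$, each weight $w_j$, and the target $c = \prod_{i=1}^{3q} p_i$ have bit-length polynomial in $q$; consequently $\size_2$ of the output instance is polynomial in the size of the $\XThreeC$ instance. This size bound is the only genuinely delicate step, and it is routine once the prime estimate is invoked. Since $\XThreeC$ is $\NP$-complete, this shows that $\SSMTP$ is $\NP$-hard.
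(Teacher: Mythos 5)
Your proof is correct, and the matrix-encoding half of it (the map $f$ from~(\ref{FDef}) with $f(a)\otimes f(b)=f(ab)$ over $\MT$) is exactly what the paper does. The difference is in the source problem: the paper reduces from the subset product problem $\SPP$ and simply cites its $\NP$-completeness (Garey--Johnson, Problem SP14), whereas you reduce directly from $\XThreeC$ via prime encoding, which amounts to inlining the standard $\NP$-hardness proof of subset product and then composing it with the same matrix encoding. Both are valid; the paper's route is shorter by citation, while yours is self-contained at the cost of carrying the prime-number-theorem size estimate (the paper gets away with the cruder bound $p_i=O(i^2)$, which also suffices). It is worth noting that your prime-encoding gadget is essentially the one the paper itself deploys later in Lemma~\ref{LemmaKPZNPComplete} for the unbounded knapsack variant $\MKP$, where additional primes $q_i$ are needed to force the exponents into $\{0,1\}$; in your $0$--$1$ setting no such gadget is required, which is why your simpler encoding works. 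One small point you handle implicitly and correctly: since the semigroup has no identity, the all-zero vector is not a solution, but this causes no trouble because the target $c=p_1\cdots p_{3q}\geq 2$ cannot be the empty product anyway.
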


\begin{proof}
To demonstrate that $\SSMTP$ is $\NP$-hard, we can reduce in polynomial time the subset product problem $\SPP$ to this problem. $\SPP$ is known to be $\NP$-complete~\cite[Problem SP14]{GareyJohnson1979}.
Define a function $f\colon \mathbb{Z}_{\geq 1} \to \Mat_k(\mathbb{Z}_{{\geq 1}})$ by Formula~(\ref{FDef}).

For an input $(w_1, \ldots, w_n, c)$ of $\SPP$, consider $(f(w_1), \ldots, f(w_n), f(c))$ as an input for $\SSMTP$.

Note that $a_1^{x_1} \cdot \ldots \cdot  a_n^{x_n} = c$ if and only if
$f(a_1)^{\otimes x_1} \otimes \ldots \otimes f(a_n)^{\otimes x_n} = f(c)$ because
$$
f(a)^{\otimes x} = 
\begin{pmatrix}
a      & \cdots & a \\
\vdots & \ddots & \vdots \\
a      & \cdots & a      
\end{pmatrix}^{\!\!\otimes x} 
= 
\begin{pmatrix}
a^x      & \cdots & a^x \\
\vdots & \ddots & \vdots \\
a^x      & \cdots & a^x      
\end{pmatrix}
$$
and
\begin{equation*}
f(a) \otimes f(b) = 
\begin{pmatrix}
a      & \cdots & a \\
\vdots & \ddots & \vdots \\
a      & \cdots & a      
\end{pmatrix}
\otimes
\begin{pmatrix}
b      & \cdots & b \\
\vdots & \ddots & \vdots \\
b      & \cdots & b      
\end{pmatrix} 
=
\begin{pmatrix}
a\cdot b      & \cdots & a\cdot b \\
\vdots & \ddots & \vdots \\
a\cdot b      & \cdots & a\cdot b      
\end{pmatrix}.
\end{equation*}

Hence, this problem is $\NP$-hard.
\end{proof}

\begin{lemma}\label{Lemma23}
$\KMPP$ is $\NP$-hard.
\end{lemma}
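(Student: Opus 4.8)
The plan is to mirror the reduction of Lemma~\ref{Lemma21}, but now in the unbounded regime where the multiplicities range over $\mathbb{Z}_{\geq 0}$. Recall that the map $f$ of~(\ref{FDef}) is an injective homomorphism from $\ZP$ into $\MP$: exactly as in Lemma~\ref{Lemma21} one checks $f(a)\otimes f(b) = f(a+b)$ and hence $f(a)^{\otimes x} = f(xa)$. Consequently, for all $x_1, \ldots, x_n \in \mathbb{Z}_{\geq 0}$,
\begin{equation*}
f(w_1)^{\otimes x_1} \otimes \ldots \otimes f(w_n)^{\otimes x_n} = f(x_1 w_1 + \ldots + x_n w_n),
\end{equation*}
so, by injectivity of $f$, this product equals $f(c)$ if and only if $x_1 w_1 + \ldots + x_n w_n = c$. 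Thus $f$ reduces the scalar integer knapsack problem $\KP$ (Problem~\ref{ClassicalKnapsack}) to $\KMPP$: an instance $(w_1,\ldots,w_n,c)$ is sent to $(f(w_1),\ldots,f(w_n),f(c))$, which is computable in polynomial time. The only degenerate case is the empty product (all $x_i=0$), and it is harmless once we ensure $c>0$, as then every solution uses at least one factor.

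It remains to see that $\KP$ is itself NP-hard, and here I would reduce from Exact Cover by 3-Sets $\XThreeC$, which is NP-complete~\cite[Problem SP2]{GareyJohnson1979}. Given a universe $U=\{u_1,\ldots,u_{3q}\}$ and $3$-element sets $C_1,\ldots,C_m$, encode each set as a base-$B$ integer $w_j = B^{3q} + \sum_{u_i\in C_j} B^{\,i-1}$, whose top digit is a \emph{budget} marker and whose low $3q$ digits form the indicator vector of $C_j$; set the target $c = q\,B^{3q} + \sum_{i=1}^{3q} B^{\,i-1}$. Writing $y_i = \sum_{j:\,u_i\in C_j} x_j$ for the $i$-th column sum, the equation $\sum_j x_j w_j = c$ separates into a budget contribution $(\sum_j x_j)B^{3q}$ and a low block $\sum_i y_i B^{\,i-1}$.

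The main obstacle, in contrast to the subset sum reduction of Lemma~\ref{Lemma21}, is that the multiplicities $x_j$ now range over all of $\mathbb{Z}_{\geq 0}$, so I must exclude spurious solutions arising from repeated items or from carries between digit blocks. This is exactly what the budget digit controls: choosing $B>q$, one shows $\sum_j x_j = q$ (if $\sum_j x_j \geq q+1$ the budget term alone exceeds $c$, while $\sum_j x_j \leq q-1$ makes the low block too small, since $y_i\le\sum_j x_j$ bounds it), and consequently every column sum satisfies $y_i \le \sum_j x_j = q < B$. With all $y_i$ below the base, uniqueness of base-$B$ representations turns $\sum_i y_i B^{\,i-1} = \sum_i B^{\,i-1}$ into $y_i=1$ for every $i$, i.e. the chosen sets form an exact cover and automatically each $x_j\in\{0,1\}$. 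Conversely an exact cover yields a $0$--$1$ solution with $c>0$. Composing with the embedding $f$ gives $\XThreeC \le_p \KP \le_p \KMPP$, and hence $\KMPP$ is NP-hard.
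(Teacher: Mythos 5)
Your proposal is correct, and the matrix-level part of it is exactly the paper's argument: the paper proves Lemma~\ref{Lemma23} in one line by reusing the map $f$ of~(\ref{FDef}) from Lemma~\ref{Lemma21} to reduce $\KP$ to $\KMPP$. Where you diverge is in how NP-hardness of the scalar problem $\KP$ itself is justified: the paper simply cites a known result (Proposition 4.1.1 of Haase's thesis), whereas you prove it from scratch by reducing $\XThreeC$ to $\KP$ via a base-$B$ digit encoding with a budget digit. Your reduction is sound: with $B>q$ the budget digit forces $\sum_j x_j = q$, which keeps every column sum $y_i$ below the base, so uniqueness of base-$B$ representations forces $y_i=1$ and hence $x_j\in\{0,1\}$; the converse direction and the polynomial bound on the bit-lengths are immediate, and you correctly note that $c>0$ disposes of the empty product (relevant because $\MP$ has no identity matrix). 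It is worth observing that your argument is the precise additive analogue of what the paper does for the multiplicative case in Lemma~\ref{LemmaKPZNPComplete}, where $\XThreeC$ is reduced to $\MKP$ by prime encoding and the "use each $q_i$ exactly once" constraint plays the role of your budget digit. The trade-off is the usual one: the citation keeps the paper short, while your version is self-contained and makes visible why unboundedness of the $x_j$ causes no spurious solutions.
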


\begin{proof}
To prove that $\KMPP$ is $\NP$-hard,
we can use the reduction described in the proof of Lemma~\ref{Lemma21} to
 reduce the knapsack problem for non-negative integers $\KP$ to $\KMPP$. $\KP$ is known to be $\NP$-complete~\cite[Proposition 4.1.1]{Haase2012}.
\end{proof}

Now, we consider the knapsack problem for positive integers with multiplication $\MKP$.

\begin{lemma}\label{LemmaKPZNPComplete}
$\MKP$ is $\NP$-complete.
\end{lemma}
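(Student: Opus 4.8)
The plan is to prove the two directions of $\NP$-completeness separately, reusing the prime-coding idea that underlies the classical hardness of the subset product problem, but with one extra observation that lets it survive the passage from $0/1$ exponents to arbitrary non-negative exponents.

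Membership in $\NP$ follows the pattern of Lemmas~\ref{Lemma13} and~\ref{Lemma14}. Given an input $I = (w_1, \ldots, w_n, c)$ with $w_i, c \in \mathbb{Z}_{\geq 1}$, I would first show that any solvable instance admits a solution with small exponents: whenever $w_i = 1$ one may take $x_i = 0$ without changing the product, and whenever $w_i \geq 2$ the chain $2^{x_i} \leq w_i^{x_i} \leq w_1^{x_1} \cdots w_n^{x_n} = c$ (valid because every factor is at least $1$) forces $x_i \leq \log_2 c \leq \size_2(I)$. Hence there is a solution whose exponent vector has size polynomial in $\size_2(I)$, and this vector is the certificate. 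It is verified by multiplying the factors one at a time and rejecting as soon as the running product exceeds $c$ (safe, since all remaining factors are $\geq 1$); all intermediate numbers then stay bounded by $c$, so the check runs in polynomial time.

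For $\NP$-hardness I would reduce from Exact Cover by 3-Sets $\XThreeC$. Given a ground set $X = \{e_1, \ldots, e_{3q}\}$ and a family $\{S_1, \ldots, S_n\}$ of $3$-element subsets, I would assign to $e_j$ the $j$-th prime $p_j$, set $w_i = \prod_{e_j \in S_i} p_j$ for each $i$, and set the target to the squarefree product $c = \prod_{j=1}^{3q} p_j$. Since the $m$-th prime satisfies $p_m = O(m \log m)$, each $w_i$ has $O(\log q)$ bits and $c$ has $O(q \log q)$ bits, and the primes can be generated by a sieve in polynomial time, so the map is a polynomial-time reduction. Correctness of one direction is immediate: an exact cover gives a $0/1$ choice of the $w_i$ whose product is exactly $c$, hence a solution of $\MKP$.

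The hard part, and the only place where unbounded exponents must be controlled, is the converse. Suppose $w_1^{x_1} \cdots w_n^{x_n} = c$ with $x_i \in \mathbb{Z}_{\geq 0}$. Comparing the exponent of each prime $p_j$ on both sides gives $\sum_{i \,:\, e_j \in S_i} x_i = 1$, because $c$ is squarefree; as a sum of non-negative integers equal to $1$, this forces every $x_i \in \{0,1\}$ and makes the chosen sets cover each $e_j$ exactly once, i.e.\ form an exact cover. Thus the squarefreeness of $c$ collapses the knapsack behaviour ($x_i \in \mathbb{Z}_{\geq 0}$) onto the subset-product behaviour ($x_i \in \{0,1\}$), which is exactly what makes the reduction go through; the remaining obstacle is the routine size estimate confirming that the primorial target $c$ still has only polynomially many bits.
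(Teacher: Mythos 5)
Your proof is correct, and while it follows the same overall strategy as the paper (membership via the bound $x_i \leq \log_2 c$ when $w_i \geq 2$, hardness via prime encoding of an $\XThreeC$ instance), your reduction is genuinely leaner. The paper, following Moret's proof for $\SPP$, introduces an auxiliary prime $q_i$ for each triple and a \emph{pair} of items $p_{i1}p_{i2}p_{i3}q_i$ and $q_i$, using the fact that $q_i$ divides the target exactly once to force $x_i + y_i = 1$ and hence $x_i \in \{0,1\}$. You observe that this gadget is unnecessary for the knapsack version: with $w_i = \prod_{e_j \in S_i} p_j$ and the squarefree target $c = \prod_j p_j$, unique factorization gives $\sum_{i \,:\, e_j \in S_i} x_i = 1$ for every $j$, and since each $x_i$ occurs in at least one such sum of non-negative integers, every $x_i$ is automatically $0$ or $1$. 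This halves the number of items, avoids the extra primes entirely, and makes the collapse from $\mathbb{Z}_{\geq 0}$-exponents to $\{0,1\}$-exponents a one-line consequence of squarefreeness rather than a separate bookkeeping step; the paper's version has the mild advantage of being a near-verbatim reuse of the known $\SPP$ construction. Your size estimates ($p_m = O(m\log m)$, so $c$ has $O(q\log q)$ bits) are if anything sharper than the paper's $p_i = O(i^2)$ bound, and both suffice for polynomial-time computability of the reduction.
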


\begin{proof}
To demonstrate that this problem is $\NP$-hard, we reduce the exact cover by 3-sets problem $\XThreeC$ in polynomial time to this problem. It is known that $\XThreeC$ is $\NP$-complete~\cite[Problem SP2]{GareyJohnson1979}.
Our proof is a modification of Theorem~8.8 in \cite{Moret1997} about the $\NP$-completeness of $\SPP$.

We use prime encoding. Let a set $A = \{a_1, a_2, \ldots, a_{3m}\}$ and a collection
$C = \{c_1, \ldots, c_n\}$ of $3$-element subsets of $A$ be an input of $\XThreeC$.
Generate the first $3m + n$ primes ordered in the increasing order $p_1, p_2, \ldots, p_{3m}, p_{3m+1}, \ldots p_{3m+n}$. Denote $p_{3m+1}, \ldots p_{3m+n}$ by $q_1, \ldots q_n$.
For every $c_i = \{a_{i_1}, a_{i_2}, a_{i_3}\}$, 
$i \in \{1, \ldots, n\}$, denote the primes $p_{i_1}, p_{i_2}, p_{i_3}$ with the corresponding indexes 
$i_1, i_2, i_3$ by $p_{i1}, p_{i2}, p_{i3}$.
Now, consider the following input of $\MKP$:
\begin{equation}\label{Lemma8Inst}
(p_{11}p_{12}p_{13}q_1,\, q_1,\, p_{21}p_{22}p_{23}q_2,\, q_2, \ldots,\, p_{n1}p_{n2}p_{n3}q_n,\, q_n,\, p_1p_2\cdots p_{3m}q_1\cdots q_n).
\end{equation}
Let this be a yes-instance, i.e. there is a vector $(x_1, y_1, x_2, y_2, \ldots x_n, y_n)$ such that
$$
(p_{11}p_{12}p_{13}q_1)^{x_1} \cdot q_1^{y_1} \cdot (p_{21}p_{22}p_{23}q_2)^{x_2} \cdot q_2^{y_2} \cdots  (p_{n1}p_{n2}p_{n3}q_n)^{x_n} \cdot q_n^{y_n} = p_1p_2\cdots p_{3m}q_1\cdots q_n.
$$
This implies that
$$
(p_{11}p_{12}p_{13})^{x_1} \cdot q_1^{x_1 + y_1} \cdot (p_{21}p_{22}p_{23})^{x_2} \cdot q_2^{x_2 + y_2} \cdots (p_{n1}p_{n2}p_{n3})^{x_n} \cdot q_n^{x_n + y_n} = p_1p_2\cdots p_{3m}q_1\cdots q_n.
$$
Note that $x_i + y_i = 1$ because $q_i$ can only be used once and only once. Therefore,
 $x_i \in \{0, 1\}$ and $y_i \in \{0, 1\}$. 
Hence,
\begin{equation}\label{Lemma8Fml}
(p_{11}p_{12}p_{13})^{x_1} \cdot (p_{21}p_{22}p_{23})^{x_2} \cdots (p_{n1}p_{n2}p_{n3})^{x_n} = p_1p_2\cdots p_{3m},
\end{equation}
where $x_i \in \{0, 1\}$.
Because of the fundamental theorem of arithmetic, 
the collection 
$$C' = \{c_i \in C : x_i = 1\}$$
is an exact cover for $A$.
Therefore, $(A, C)$ is a yes-instance of $\XThreeC$.

Conversely, assume that $(A, C)$  is a yes-instance of $\XThreeC$, and
$C' \subseteq C$ be an exact cover for $A$.
Consider the vector $(x_1, \ldots, x_n)$ such that 
$x_i = [c_i \in C']$, where $[\cdot]$ is the Iverson bracket.
Then
$$
(p_{11}p_{12}p_{13}q_1)^{x_1} \cdot q_1^{1 - x_1} \cdot (p_{21}p_{22}p_{23}q_2)^{x_2} \cdot q_2^{1 - x_2} \cdots (p_{n1}p_{n2}p_{n3}q_n)^{x_n} \cdot q_n^{1 - x_n} = p_1p_2\cdots p_{3m}q_1\cdots q_n.
$$
Therefore, (\ref{Lemma8Inst}) is a yes-instance as well.

Now, let us explain why this reduction is polynomial-time.
Observe that the largest number $p_1p_2\cdots p_{3m}q_1\cdots q_n$ can be computed in time $O(n \log q_n)$. Finding the $i$-th prime can be performed in $O(p_i)$ by the brute-force method of successive divisions.
From number theory, we know that $p_i$ is $O(i^2)$. Using this information, we conclude that the reduction runs in polynomial time.

Note that $\MKP$ is in $\NP$. Indeed, let $I = (w_1, \ldots, w_n, c)$ be an input. If $w_i = 1$, then we may assume that $x_i \leq 1$. 
If $w_i \geq 2$, then $x_i \leq \log_2{c}$. Hence, the size of $(x_1, \ldots, x_n)$ such that $w_1^{x_1} \ldots w_k^{x_n} = c$ is polinomial in $\size_2(I)$. Therefore, the problem is $\NP$-complete.
\end{proof}

\begin{lemma}\label{Lemma24}
$\KMTP$ is $\NP$-hard.
\end{lemma}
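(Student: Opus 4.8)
The plan is to mirror the strategy of Lemma~\ref{Lemma23}: reuse the reduction from the proof of Lemma~\ref{Lemma22}, but feed it the \emph{knapsack} variant over $\ZT$ rather than the subset-product problem. Concretely, I would reduce $\MKP$ to $\KMTP$ via the same map $f\colon \mathbb{Z}_{\geq 1} \to \Mat_k(\mathbb{Z}_{\geq 1})$ given by Formula~(\ref{FDef}), sending $a$ to the constant matrix all of whose entries equal $a$. By Lemma~\ref{LemmaKPZNPComplete} the source problem $\MKP$ is $\NP$-complete, so a polynomial-time reduction suffices to establish $\NP$-hardness.

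The key observation is that the two identities already computed in the proof of Lemma~\ref{Lemma22}, namely $f(a) \otimes f(b) = f(ab)$ and $f(a)^{\otimes x} = f(a^x)$, do not depend on the range of the exponents. Hence, for arbitrary non-negative integers $x_1, \ldots, x_n$ one has
$$
f(w_1)^{\otimes x_1} \otimes \ldots \otimes f(w_n)^{\otimes x_n} = f\!\left(w_1^{x_1} \cdots w_n^{x_n}\right),
$$
so the left-hand side equals $f(c)$ if and only if $w_1^{x_1} \cdots w_n^{x_n} = c$. Therefore $(w_1, \ldots, w_n, c)$ is a yes-instance of $\MKP$ exactly when $(f(w_1), \ldots, f(w_n), f(c))$ is a yes-instance of $\KMTP$.

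It remains to note that the reduction runs in polynomial time: each matrix $f(w_i)$ and $f(c)$ is constant, so it is written with $k^2$ copies of the corresponding integer, and the whole instance has size polynomial in the size of the $\MKP$-instance (with $k$ treated as a fixed parameter). Combining this with Lemma~\ref{LemmaKPZNPComplete} yields $\NP$-hardness of $\KMTP$.

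The point to be careful about — rather than a genuine obstacle — is the choice of source problem. The subset-product problem $\SPP$ used in Lemma~\ref{Lemma22} only supplies $\{0,1\}$ exponents and so cannot directly certify hardness for the unrestricted exponents of the knapsack problem; this is precisely why Lemma~\ref{LemmaKPZNPComplete} was proved first, to provide a knapsack-type $\NP$-complete source over $\ZT$. Once that is in hand, the biconditional above is immediate from the homomorphism identities, and no further computation is required.
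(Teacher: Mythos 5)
Your proposal is correct and matches the paper's own proof, which likewise establishes $\NP$-hardness of $\KMTP$ by composing Lemma~\ref{LemmaKPZNPComplete} (the $\NP$-completeness of $\MKP$) with the constant-matrix reduction $f$ from the proof of Lemma~\ref{Lemma22}. The paper states this in two lines; you simply spell out the verification that the identities $f(a)\otimes f(b)=f(ab)$ and $f(a)^{\otimes x}=f(a^x)$ carry over to unrestricted non-negative exponents.
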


\begin{proof}
From Lemma~\ref{LemmaKPZNPComplete}, it follows that $\mathbf{KP}(\mathcal{N}_{\times})$ is $\NP$-complete.
Using the reduction described in the proof of Lemma~\ref{Lemma22}, we can reduce $\mathbf{KP}(\mathcal{N}_{\times})$ to $\KMTP$ in polynomial time.
\end{proof}

To prove that a problem is $\NP$-complete, we need to prove that it is in $\NP$ and is $\NP$-hard.
Therefore, combining the lemmas proved above, we obtain the following theorem, which is the main result of this section.

\begin{theorem}\label{Th11}
$\SSMPP$, $\SSMTP$, $\KMPP$, and $\KMTP$ are $\NP$-com\-plete.
\end{theorem}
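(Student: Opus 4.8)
The plan is to establish $\NP$-completeness of each of the four problems by verifying the two defining properties separately: membership in $\NP$ and $\NP$-hardness. Since the problems are not intertwined, I would simply assemble the eight lemmas proved above, pairing each containment result with the matching hardness result.

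For membership in $\NP$, I would invoke Lemmas~\ref{Lemma11}--\ref{Lemma14}. The subset sum variants $\SSMPP$ and $\SSMTP$ are immediate: a witness is a vector in $\{0,1\}^n$, whose size is polynomial in $\size_2(I)$ and whose correctness is checked by a single matrix product. The knapsack variants $\KMPP$ and $\KMTP$ require slightly more care, since the exponents range over $\mathbb{Z}_{\geq 0}$ and must be shown to be polynomially bounded; this is exactly what Remarks~\ref{Remark1} and~\ref{Remark2} provide, as the monotonicity of the power sequences forces every exponent witnessing a solution to have size polynomial in $\size_2(I)$.

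For $\NP$-hardness, I would invoke Lemmas~\ref{Lemma21}--\ref{Lemma24}, all of which rest on the constant-matrix embedding $f$ of~(\ref{FDef}), which turns a scalar equation over $\ZP$ or $\ZT$ into the corresponding tropical matrix equation over $\MP$ or $\MT$. The hardest ingredient, and the one I expect to be the main obstacle, is the $\NP$-hardness of the max-times knapsack $\KMTP$: unlike the other three, it cannot be reduced from a classical textbook problem directly, so one must first prove that $\MKP$ is itself $\NP$-complete (Lemma~\ref{LemmaKPZNPComplete}) through a prime-encoding reduction from $\XThreeC$, and only then push it through $f$. Combining the containment and hardness lemmas for all four problems then yields the theorem.
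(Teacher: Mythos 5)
Your proposal is correct and matches the paper's proof exactly: Theorem~\ref{Th11} is obtained by combining the membership Lemmas~\ref{Lemma11}--\ref{Lemma14} with the hardness Lemmas~\ref{Lemma21}--\ref{Lemma24}, and you correctly identify that the only nontrivial extra ingredient is Lemma~\ref{LemmaKPZNPComplete} for the max-times knapsack case.
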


\begin{proof}
This follows from Lemmas~\ref{Lemma11}, \ref{Lemma12}, \ref{Lemma13}, \ref{Lemma14}, \ref{Lemma21}, \ref{Lemma22}, \ref{Lemma23}, and \ref{Lemma24}. 
\end{proof}

\section{Pseudo-polynomial algorithms to solve the problems}

In this section, using the dynamic programming approach, we demonstrate that there are pseudo-polynomial algorithms to solve $\SSMPP$, $\SSMTP$, $\KMPP$, and $\KMTP$.
Recall that an algorithm is pseudo-polynomial if its time complexity is polynomial in $\size_1(I)$.

Let $\mathcal{S}$ be either $\MP$ or $\MT$.
Consider Algorithm~\ref{ALgo1} to solve $\mathbf{SSP}(\mathcal{S})$. 
This algorithm returns \textbf{true} if and only if there exists a vector $(x_1, \ldots, x_n) \in \{0, 1\}^n$ such that $W_1^{\otimes x_1} \otimes \ldots \otimes  W_n^{\otimes x_n} = C$ over $\mathcal{S}$.
This algorithm is based on the dynamic programming approach. 
We use memoization~\cite{Cormen2022} to avoid calling this function with the same parameters more than once. In other words, when the function is called, we first check if the result for the given parameters was computed. If it was, we return the stored result. Otherwise, we compute the result, save it for future use, and return it.
Note that $(0, 0, \ldots, 0)$ is not a solution to
(\ref{WWWEqC}) because $\mathcal{S}$ does not have the identity matrix.

\begin{algorithm}
    \caption{}\label{ALgo1}
    \begin{algorithmic}[0]
        \State $Memo \gets \Call{EmptyDict}{ }$
        \Function{SolveSSP}{$W_1$, \ldots, $W_n$, $C$}
            \State $I \gets (W_1, \ldots, W_n, C)$
            \If{$Memo$.\Call{Has}{$I$}}
                \Return $Memo[I]$
            \EndIf
            \If {$n = 0$} 
                \State $Memo[I] \gets \textbf{false}$
                \State \Return $Memo[I]$
            \EndIf
            \If{$W_1 = C$ \textbf{or} \Call{SolveSSP}{$W_2, \ldots, W_n, C$}} 
                \State $Memo[I] \gets \textbf{true}$
                \State \Return $Memo[I]$   
            \EndIf
            \ForAll {$X \in \V_\mathcal{S}(W_1 \otimes X = C)$}
                \If {\Call{SolveSSP}{$W_2, \ldots, W_n, X$}}
                    \State $Memo[I] \gets \textbf{true}$
                    \State \Return $Memo[I]$   
                \EndIf
             \EndFor
             \State $Memo[I] \gets \textbf{false}$
             \State \Return $Memo[I]$
        \EndFunction
    \end{algorithmic}
\end{algorithm}

\begin{lemma}\label{LemmaAlgo1}
The time complexity of Algorithm~\ref{ALgo1} is polynomial in $\size_1(I)$, where $I = (W_1, \ldots, W_n, C)$.
\end{lemma}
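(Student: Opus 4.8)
The plan is to bound the total running time by the product of two quantities: the number of distinct argument tuples on which \textsc{SolveSSP} is ever invoked, and the work performed per tuple outside of its recursive calls. This suffices because memoization guarantees that the body of the function (everything after the initial cache check) is executed at most once per distinct tuple, so the overall cost is the sum over distinct tuples of their per-tuple work, plus the cost of the cache lookups triggered by recursive calls.

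The crucial step I would establish first is that every matrix appearing as the last argument $C'$ in any invocation has all entries at most $M := \max_{ij} c_{ij}$, the largest entry of the original target. This follows by induction on the recursion: it holds at the initial call with $C' = C$, and it is preserved by both recursive branches. In the branch that drops $W_1$ the target is unchanged, while in the branch iterating over $\V_{\mathcal{S}}(W_1 \otimes X = C')$ the new target $X$ is a solution of $W_1 \otimes X = C'$, so by Lemma~\ref{Lemma10} we have $X \le X^{*}(W_1, C')$, whose entries are $x^{*}_{ij} = \min_l\{c'_{lj} - (W_1)_{li}\}$ for $\MP$ and $x^{*}_{ij} = \min_l\{c'_{lj} / (W_1)_{li}\}$ for $\MT$. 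Since the entries of $W_1$ are non-negative (resp.\ at least $1$), each $x^{*}_{ij}$ is bounded by the corresponding $c'_{lj} \le M$. Hence every target has entries in $\{0, \ldots, M\}$ for $\MP$ and in $\{1, \ldots, M\}$ for $\MT$, giving at most $(M+1)^{k^2}$ possible targets.

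With this bound in hand, the counting is straightforward: the first argument is always a suffix $(W_i, \ldots, W_n)$ of the original list, giving $n+1$ possibilities, so there are at most $(n+1)(M+1)^{k^2}$ distinct tuples. Because $M \le \size_1(C) \le \size_1(I)$ and $n \le \size_1(I)$, and because $k$ is a fixed parameter, this count is polynomial in $\size_1(I)$. For the per-tuple work, the dominant cost is enumerating $\V_{\mathcal{S}}(W_1 \otimes X = C')$; by Lemma~\ref{Lemma10} this set is contained in $\{X : X \le X^{*}(W_1, C')\}$, which has at most $(M+1)^{k^2}$ elements, each testable for membership and passable to the cache in time $\mathrm{poly}(k, \size_1(I))$. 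Since all intermediate entries stay bounded by $M$, every arithmetic and matrix operation is cheap on the unary scale. Multiplying the tuple count by this per-tuple bound yields a polynomial in $\size_1(I)$. I expect the entry-bound step to be the main obstacle: one must argue that applying the principal solution never inflates entries beyond $M$, since this single fact simultaneously caps the number of reachable targets and the size of each solution set $\V_{\mathcal{S}}$; without it the recursion could in principle spawn unboundedly large matrices and the argument would collapse. A secondary point needing care is the honest accounting of memoization overhead, namely confirming that the dictionary keys have size polynomial in $\size_1(I)$ so that each lookup is cheap, but this is routine once the counting is in place.
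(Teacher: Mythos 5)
Your proposal is correct and follows essentially the same route as the paper: memoization reduces the cost to the number of distinct tuples $(W_i, \ldots, W_n, Y)$, the targets $Y$ have entries bounded by $M = \max_{ij} c_{ij}$ (the paper gets this from the existence of $X$ with $X \otimes Y = C$, you get it by induction via the principal solution --- the same bound either way), yielding the count $(n+1)(M+1)^{k^2}$, which is polynomial in $\size_1(I)$ for fixed $k$. Your treatment is somewhat more explicit than the paper's about the per-call cost of enumerating $\V_{\mathcal{S}}(W_1 \otimes X = C')$, but this is a refinement of detail, not a different argument.
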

\begin{proof}
Note that all the operations used in the body of $\Call{SolveSSP}{}$ are polynomial-time in~$\size_1(I)$.
Therefore, it is enough to show that the number of recursive calls is polynomial in ~$\size_1(I)$.
Since we use memoization, we need to compute the number of different inputs $(W_i, \ldots, W_n, Y)$ of this function.
First, note that the number of different tuples $(W_i, \ldots, W_n)$ is $n + 1$. Second, 
for every $Y$, there is a matrix $X$ such that $X \otimes Y = C$. Hence, $0 \leq y_{ij} \leq M$, where $M = \max\{c_{ij}\}_{ij}$. Therefore,
the number of different inputs is not greater than
$(n + 1) (M + 1)^{k^2}$. This expression is $O((\size_1(I))^{k^2 + 1})$. 
Therefore, the algorithm is polynomial-time in $\size_1(I)$.
\end{proof}

For the knapsack problem, consider Algorithm~\ref{ALgo2}.

\begin{algorithm}
    \caption{}\label{ALgo2}
    \begin{algorithmic}[0]
        \State $Memo \gets \Call{EmptyDict}{ }$
        \Function{SolveKP}{$W_1$, \ldots, $W_n$, $C$}  
            \State $I \gets (W_1, \ldots, W_n, C)$
            \If{$Memo$.\Call{Has}{$I$}}
                \Return $Memo[I]$
            \EndIf
            \If {$n = 0$} 
                \State $Memo[I] \gets \textbf{false}$
                \State \Return $Memo[I]$
            \EndIf
            \If{$W_1 = C$ \textbf{or} \Call{SolveKP}{$W_2, \ldots, W_n, C$}} 
                \State $Memo[I] \gets \textbf{true}$
                \State \Return $Memo[I]$   
            \EndIf
            \ForAll {$X \in \V_\mathcal{S}(W_1 \otimes X = C)$}
                \If {\Call{SolveKP}{$W_2, \ldots, W_n, X$} \textbf{or} $X \neq C$ \textbf{and} \Call{SolveKP}{$W_{1}, \ldots, W_n, X$}}
                    \State $Memo[I] \gets \textbf{true}$
                    \State \Return $Memo[I]$   
                \EndIf
             \EndFor
             \State $Memo[I] \gets \textbf{false}$
             \State \Return $Memo[I]$
        \EndFunction
    \end{algorithmic}
\end{algorithm}

\begin{lemma}\label{LemmaAlgo2}
The time complexity of Algorithm~\ref{ALgo2} is polynomial in $\size_1(I)$, where $I = (W_1, \ldots, W_n, C)$.
\end{lemma}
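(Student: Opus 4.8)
The plan is to follow the structure of the proof of Lemma~\ref{LemmaAlgo1}, reducing the claim to two facts: (i) a single invocation of \textsc{SolveKP} performs only polynomially many elementary operations once its recursive calls are set aside, and (ii) because of memoization the body is fully executed at most once per distinct argument tuple, so it suffices to bound the number of tuples $(W_i, \ldots, W_n, Y)$ that can ever occur. Just as in Lemma~\ref{LemmaAlgo1}, the list part of every recursive argument is a suffix $(W_i, \ldots, W_n)$ of the original list, since each recursive call either deletes the leading matrix or keeps the whole current list; hence there are only $n+1$ possibilities for it. For the target $Y$, I would show by induction along the recursion that every target has entries bounded by $M = \max\{c_{ij}\}_{ij}$: the initial target is $C$, and any new target $X$ is a solution of $W_1 \otimes X = Y$ for the current target $Y$, where I write $W_1 = (a_{ij})$ and $Y = (y_{ij})$. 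By Lemma~\ref{Lemma10}, $X \le X^{*}(W_1, Y)$ with $x^{*}_{ij} = \min_l\{y_{lj}\otimes a_{li}^{\otimes -1}\} \le y_{lj} \le M$, using $a_{li}\ge 0$ for $\MP$ and $a_{li}\ge 1$ for $\MT$. Thus there are at most $(M+1)^{k^2}$ possible targets, and the number of distinct inputs is at most $(n+1)(M+1)^{k^2} = O(\size_1(I)^{k^2+1})$.

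The feature of Algorithm~\ref{ALgo2} that is absent from Algorithm~\ref{ALgo1}, and the step I expect to be the main obstacle, is the extra recursive call \textsc{SolveKP}$(W_1, \ldots, W_n, X)$ that retains the full matrix list. The ``the list strictly shortens'' termination argument of Lemma~\ref{LemmaAlgo1} no longer applies, so I must rule out infinite descent, i.e. a recursive call re-entering an argument tuple that is still on the call stack and therefore not yet stored in $Memo$. The idea is to analyze a maximal chain of consecutive keep-$W_1$ calls $Y_0 = C, Y_1, Y_2, \ldots$, where $W_1 \otimes Y_{t+1} = Y_t$ and, thanks to the guard $X \neq C$, $Y_{t+1} \neq Y_t$. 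Composing these relations yields $C = W_1^{\otimes t} \otimes Y_t$, which forces $W_1^{\otimes t}$ to be entrywise bounded by $M$ (each remaining factor has entries $\ge 0$ in $\MP$, respectively $\ge 1$ in $\MT$, so every entry of $W_1^{\otimes t}$ is dominated by a corresponding entry of $C$). By Remark~\ref{Remark1} and Remark~\ref{Remark2}, if $W_1 \neq O$ (respectively $W_1$ has an entry $>1$) this bounds $t$ by $2M$; and in the degenerate cases $W_1 = O$ or $W_1$ the all-ones matrix, the idempotency $W_1^{\otimes t} = W_1$ means that $Y_1 = W_1 \otimes Y_2$ would already have constant columns, forcing $Y_1 = C$ and contradicting the guard, so the chain collapses after one step. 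Hence every chain of keep-$W_1$ calls has length $O(M) = O(\size_1(I))$, the recursion is well-founded, and each distinct input is computed exactly once.

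Combining the two ingredients, the total running time is at most the number of distinct inputs, $O(\size_1(I)^{k^2+1})$, times the per-call cost. Each call solves one linear matrix equation via Lemma~\ref{Lemma10}, enumerates its at most $(M+1)^{k^2}$ solutions, and performs a constant number of comparisons and memoized dispatches per solution; all of these are polynomial in $\size_1(I)$, recalling that $k$ is treated as a fixed parameter. Therefore Algorithm~\ref{ALgo2} runs in time polynomial in $\size_1(I)$.
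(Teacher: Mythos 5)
Your proposal is correct, and its counting skeleton (at most $n+1$ suffixes times at most $(M+1)^{k^2}$ targets bounded by $M=\max\{c_{ij}\}_{ij}$, each memoized tuple processed once at polynomial cost) is exactly the argument of Lemma~\ref{LemmaAlgo1}, which is all the paper offers here: its proof of Lemma~\ref{LemmaAlgo2} is literally the single sentence ``similar to the proof of Lemma~\ref{LemmaAlgo1}.'' What you add beyond the paper is the explicit termination analysis for the self-referencing call $\Call{SolveKP}{W_1,\ldots,W_n,X}$, and this is a genuine point the paper leaves implicit: with memoization that stores results only on return, one must rule out re-entering a tuple still on the call stack. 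Your route through $C = W_1^{\otimes t}\otimes Y_t$, Remarks~\ref{Remark1}--\ref{Remark2}, and the separate idempotent degenerate cases works, but there is a shorter argument you nearly state yourself: since every entry of $W_1$ is $\geq 0$ (resp.\ $\geq 1$), $W_1\otimes Y_{t+1}\geq Y_{t+1}$ entrywise, so $Y_t\geq Y_{t+1}$, and the guard $X\neq C$ forces strict decrease in the partial order; as all entries lie in $\{0,\ldots,M\}$, a keep-$W_1$ chain has length at most $k^2M+1$ and no cycle can occur, uniformly in whether $W_1$ is degenerate. Either way the recursion is well-founded and the stated polynomial bound follows.
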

\begin{proof}
The proof is similar to the proof of Lemma~\ref{LemmaAlgo1}.
\end{proof}

\begin{theorem}\label{Th2}
$\KMPP$, $\KMTP$, $\SSMPP$, and $\SSMTP$
are weakly $\NP$-complete.
\end{theorem}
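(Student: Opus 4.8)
The plan is to use the fact that a problem is \emph{weakly $\NP$-complete} exactly when it is $\NP$-complete and, in addition, admits an algorithm running in time polynomial in the unary size $\size_1(I)$ of the input; the existence of such a pseudo-polynomial algorithm is precisely what precludes strong $\NP$-completeness (assuming $\mathsf{P} \neq \NP$). The $\NP$-completeness half is already in hand: by Theorem~\ref{Th11}, each of $\SSMPP$, $\SSMTP$, $\KMPP$, and $\KMTP$ is $\NP$-complete. Thus the only remaining task is to exhibit, for every one of the four problems, a pseudo-polynomial algorithm; I would use Algorithm~\ref{ALgo1} for the subset sum problems $\SSMPP$ and $\SSMTP$ and Algorithm~\ref{ALgo2} for the knapsack problems $\KMPP$ and $\KMTP$, instantiating $\mathcal{S}$ as $\MP$ or $\MT$ as appropriate.

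First I would verify that Algorithm~\ref{ALgo1} decides $\mathbf{SSP}(\mathcal{S})$ correctly, arguing by induction on $n$. The structural observation is that any solution of (\ref{WWWEqC}) either has $x_1 = 0$, in which case the remaining factors satisfy $W_2^{\otimes x_2} \otimes \cdots \otimes W_n^{\otimes x_n} = C$ (captured by the recursive call on $(W_2, \ldots, W_n, C)$, together with the direct test $W_1 = C$ for the case where every remaining exponent vanishes), or has $x_1 = 1$, in which case $W_2^{\otimes x_2} \otimes \cdots \otimes W_n^{\otimes x_n}$ equals some matrix $X$ with $W_1 \otimes X = C$. By Lemma~\ref{Lemma10}, the loop over $X \in \V_{\mathcal{S}}(W_1 \otimes X = C)$ enumerates precisely these $X$, so the case analysis is exhaustive; the base case $n = 0$ correctly returns \textbf{false} because $(0, \ldots, 0)$ is not a solution in the absence of an identity matrix. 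Together with Lemma~\ref{LemmaAlgo1}, which bounds the running time by a polynomial in $\size_1(I)$, this yields pseudo-polynomial algorithms for $\SSMPP$ and $\SSMTP$.

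Next I would treat Algorithm~\ref{ALgo2} for $\mathbf{KP}(\mathcal{S})$ in the same spirit, the one difference being that each $W_i$ may now be used repeatedly. The case analysis splits on $x_1 = 0$ versus $x_1 \geq 1$; in the latter case one peels off a single factor $W_1$, passes to a solution $X$ of $W_1 \otimes X = C$ (again enumerated by Lemma~\ref{Lemma10}), and then either stops using $W_1$ (the call on $(W_2, \ldots, W_n, X)$) or continues using it (the call on $(W_1, \ldots, W_n, X)$). The guard $X \neq C$ on the second call is the point I would scrutinize most carefully: without it the recursion could reissue the identical call on $(W_1, \ldots, W_n, C)$ and fail to terminate. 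Termination is secured by Remarks~\ref{Remark1} and \ref{Remark2}, which bound the exponent of a non-trivial $W_1$ in any solution by $2M$ with $M = \max\{c_{ij}\}_{ij}$; consequently only finitely many targets $X$, all with entries in $\{0, \ldots, M\}$, ever arise, and memoization over these at most $(n + 1)(M + 1)^{k^2}$ states forces the procedure to halt. Correctness again follows by induction, and Lemma~\ref{LemmaAlgo2} provides the pseudo-polynomial time bound, giving pseudo-polynomial algorithms for $\KMPP$ and $\KMTP$.

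Combining the two halves, each of the four problems is $\NP$-complete by Theorem~\ref{Th11} and solvable in time polynomial in $\size_1(I)$ by Algorithms~\ref{ALgo1} and \ref{ALgo2} (with Lemmas~\ref{LemmaAlgo1} and \ref{LemmaAlgo2}), hence weakly $\NP$-complete. I expect the main obstacle to lie in the correctness-with-termination argument for the knapsack algorithm: one must show simultaneously that permitting the repeated call on $(W_1, \ldots, W_n, X)$ never overlooks a genuine solution and that the guard $X \neq C$, combined with the monotonicity of tropical matrix powers from Remarks~\ref{Remark1} and \ref{Remark2}, rules out infinite descent, while leaning on memoization to collapse the search to polynomially many (in $\size_1(I)$) distinct states.
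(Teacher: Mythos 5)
Your proposal is correct and follows essentially the same route as the paper: weak $\NP$-completeness is obtained by combining the $\NP$-completeness established in Theorem~\ref{Th11} with the pseudo-polynomial running-time bounds of Lemmas~\ref{LemmaAlgo1} and~\ref{LemmaAlgo2} for Algorithms~\ref{ALgo1} and~\ref{ALgo2}. The additional correctness and termination analysis you supply for the two algorithms is more detailed than what the paper writes out, but it does not change the argument.
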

\begin{proof}
It follows from Lemmas~\ref{LemmaAlgo1} and~\ref{LemmaAlgo2} that
there are pseudo-polynomial algorithms to solve these problems.
\end{proof}

\section{Generic polynomial algorithms to solve \texorpdfstring{$\SSMTP$}{SSP(M\_{max, ×}\^{}k)} and \texorpdfstring{$\KMTP$}{KP(M\_{max, ×}\^{}k)}}

In this section, we prove that there are polynomial generic algorithms to solve $\SSMTP$ and $\KMTP$.
Let $U$ be the set of all tuples of the form $(W_1, \ldots, W_n, C)$, 
where $W_1, \ldots, W_n, C \in \Mat_k(\mathbb{Z}_{\geq 1})$ and $n \in \mathbb{Z}_{\geq 0}$.
When considering a generic algorithm, we need to define a stratification of $U$.
In this section, we consider the stratification $\{U_m\}_m$ of $U$,
where $U_m = \{I \in U : \size_2(I) = m\}$. 

Let $M_l = \{A \in \Mat_k(\mathbb{Z}_{\geq 1}) : \size_2(A) = l\}$.
Each matrix in $M_l$ can be encoded by a string with $k^2 - 1$ delimiters of length $l$ of the following form:
\begin{equation}\label{pattern}
1d_{11}d_{12}\ldots d_{1l_1} \text{\textvisiblespace} 1d_{21}d_{22}\ldots d_{2l_2}\text{\textvisiblespace}\ldots\text{\textvisiblespace}1d_{k^21}d_{k^22}\ldots d_{k^2l_{k^2}},
\end{equation}
where $d_{ij} \in \{0, 1\}$.
Note that $l_1 + l_2 + \ldots + l_{k^2} = l - 2k^2 - 1$. Hence,
when the positions of the delimiters are fixed, each pattern~(\ref{pattern}) 
encodes $2^{l - 2k^2 - 1}$
matrices. The number of these patterns can be found using the stars and bars method and is equal to $\binom{l - k^2}{k^2 - 1}$. Therefore,
$$
|M_l| = 2^{l - 2k^2 + 1} \binom{l - k^2}{k^2 - 1}.
$$

\begin{lemma}\label{LemmaGen13}
Let $X, Y \in \Mat_k(\mathbb{Z}_{\geq 1})$. 
Then, $\size_2(X) + \size_2(Y) \leq \size_2(X \otimes Y) + 2k^2 - 1$. 
\end{lemma}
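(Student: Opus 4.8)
The plan is to reduce the matrix inequality to a purely entrywise statement and then prove it by a careful index-selection argument. First I would substitute Formula~(\ref{sizeMatrix}) for each of the three matrices $X$, $Y$, and $X \otimes Y$. After cancelling the common $k^2 - 1$ terms, the claimed inequality $\size_2(X) + \size_2(Y) \le \size_2(X \otimes Y) + 2k^2 - 1$ becomes equivalent to
$$
\sum_{i,j} \size_2(x_{ij}) + \sum_{i,j} \size_2(y_{ij}) \le \sum_{i,j} \size_2\bigl((X \otimes Y)_{ij}\bigr) + k^2.
$$
So it suffices to bound the sum of the sizes of the entries of the product from below by the sum of the sizes of all entries of $X$ and of $Y$, up to an additive slack of $k^2$.

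Next I would lower-bound a single entry of the product. Since we work over $\MT$, we have $(X \otimes Y)_{ij} = \max_l x_{il} y_{lj} \ge x_{il} y_{lj}$ for \emph{every} index $l$, and all entries are positive integers, so $\size_2$ is monotone. Combining this monotonicity with the lower bound in~(\ref{sizeOfProduct}) gives, for any choice of $l = l(i,j)$,
$$
\size_2\bigl((X \otimes Y)_{ij}\bigr) \ge \size_2\bigl(x_{i,l(i,j)}\bigr) + \size_2\bigl(y_{l(i,j),j}\bigr) - 1.
$$
The point is that we are free to select the index $l(i,j)$ independently for each pair $(i,j)$, and the $-1$ loss will contribute a total of exactly $k^2$ after summing over the $k^2$ pairs, matching the slack we need.

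The crux of the argument, and the step I expect to be the main obstacle, is choosing the indices $l(i,j)$ so that summing the above inequality over all $(i,j)$ reproduces $\sum_{i,j}\size_2(x_{ij})$ and $\sum_{i,j}\size_2(y_{ij})$ exactly, with no entry over- or undercounted. This forces the following two covering conditions: for each fixed row $i$ the map $j \mapsto l(i,j)$ must be a bijection of $\{1,\dots,k\}$ (so that the row $x_{i,1},\dots,x_{i,k}$ is hit once each), and for each fixed column $j$ the map $i \mapsto l(i,j)$ must be a bijection (so that the column $y_{1,j},\dots,y_{k,j}$ is hit once each). These conditions say precisely that $(i,j) \mapsto l(i,j)$ is a $k \times k$ Latin square, which always exists; the cyclic choice $l(i,j) = \bigl((i + j - 2) \bmod k\bigr) + 1$ works explicitly.

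With such a Latin square fixed, I would sum the entrywise bound over all $(i,j)$ and use that each row-permutation and each column-permutation leaves the corresponding entry-size sum invariant, yielding
$$
\sum_{i,j} \size_2\bigl((X \otimes Y)_{ij}\bigr) \ge \sum_{i,j} \size_2(x_{ij}) + \sum_{i,j} \size_2(y_{ij}) - k^2.
$$
Rearranging is exactly the reduced inequality from the first paragraph, which completes the proof. The remaining verifications are routine bookkeeping; the only genuinely nonobvious ingredient is recognizing that a Latin square is the right combinatorial device for counting each entry precisely once.
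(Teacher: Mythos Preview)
Your proposal is correct and follows essentially the same approach as the paper: both arguments pick a cyclic index $l(i,j)$ (the paper uses $(i+j)\bmod k + 1$, you use $((i+j-2)\bmod k)+1$), combine the bound $x_{il}y_{lj}\le (X\otimes Y)_{ij}$ with~(\ref{sizeOfProduct}), and sum over all $(i,j)$ using that the chosen index gives a bijection in each row and column. Your explicit identification of the Latin-square condition is a nice way to phrase why the cyclic choice works, but the underlying proof is the same.
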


\begin{proof}
Let $A = X \otimes Y$. Then, for every $i$ and $j$,
$$
x_{i1} \otimes y_{1j} \oplus x_{i2} \otimes y_{2j} \oplus \cdots \oplus x_{ik} \otimes y_{kj} = a_{ij}.
$$
In the other notation, 
$\max\{x_{il} \cdot y_{lj}\}_{l} = a_{ij}.$
Therefore, for each index $l$, $x_{il} \cdot y_{lj} \leq a_{ij}.$
This implies that for each $l$,
$\size_2(x_{il} \cdot y_{lj}) \leq \size_2(a_{ij})$.
Using~(\ref{sizeOfProduct}), we get 
$$
\size_2(x_{il}) + \size_2(y_{lj}) \leq \size_2(a_{ij}) + 1.
$$

Let us choose 
\begin{equation}\label{DefL}
l = (i + j) \bmod k + 1.
\end{equation}
Then,
$$
\size_2(x_{i, (i + j) \bmod k + 1}) + \size_2(y_{(i + j) \bmod k + 1, j}) \leq \size_2(a_{ij}) + 1.
$$
Summing this inequality for all $i$ and $j$, we obtain
$$
\sum_{i = 1}^k\sum_{j = 1}^k\size_2(x_{i, (i + j) \bmod k + 1}) + \sum_{i = 1}^k\sum_{j = 1}^k\size_2(y_{(i + j) \bmod k + 1, j}) \leq \sum_{i = 1}^k\sum_{i = 1}^k\size_2(a_{ij}) + k^2.
$$
Note that Formula~(\ref{DefL}) defines an rearrangement of $\{1, \ldots, k\}$ for each fixed $i$ and each fixed $j$. Therefore, the first double sum has all the elements of $X$,
and the second double sum has all the elements of $Y$:
$$
\sum_{i = 1}^k\sum_{j = 1}^k\size_2(x_{ij}) + \sum_{i = 1}^k\sum_{j = 1}^k\size_2(y_{ij}) \leq \sum_{i = 1}^k\sum_{i = 1}^k\size_2(a_{ij}) + k^2.
$$
Using Formula~(\ref{sizeMatrix}), we obtain
$$
\size_2(X) - k^2 + 1 + \size_2(Y) - k^2 + 1 \leq \size_2(A) - k^2 + 1 + k^2.
$$
This yields that 
\[
\size_2(X) + \size_2(Y)\leq \size_2(A) + 2k^2 - 1.
\qedhere
\]
\end{proof}

\begin{lemma}\label{Lemma100}
Let $S_{l, P(m)} = \{A \in M_l : |\!\V(X \otimes Y = A)| > P(m)\}$,
where $l \leq m$ and $P(m)$ is a polynomial. Then,
\begin{equation}
\label{FormulaLemmaGen14}
|S_{l, P(m)}| \leq \frac{m^{k^2 + 1}}{P(m)}
\cdot|M_l|.
\end{equation}
\end{lemma}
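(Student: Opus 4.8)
The plan is to run a double-counting argument on the pairs $(X,Y)$ with $X \otimes Y \in M_l$, controlling their number via the size estimate of Lemma~\ref{LemmaGen13} together with the explicit count $|M_l| = 2^{l-2k^2+1}\binom{l-k^2}{k^2-1}$.

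First I would set up the count. By definition every $A \in S_{l,P(m)}$ has more than $P(m)$ solutions $(X,Y)$ of $X \otimes Y = A$, and solution pairs attached to different $A$ are distinct. Since each such pair satisfies $X \otimes Y = A \in M_l$, summing over $A \in S_{l,P(m)}$ yields
\[
|S_{l,P(m)}| \cdot P(m) < \#\{(X,Y) : X \otimes Y \in M_l\}.
\]
Hence it suffices to prove that $\#\{(X,Y) : X \otimes Y \in M_l\} \le m^{k^2+1}\,|M_l|$.

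Next I would bound the number of admissible pairs. By Lemma~\ref{LemmaGen13}, any pair with $X \otimes Y \in M_l$ has $\size_2(X) + \size_2(Y) \le l + 2k^2 - 1$, so, writing $s = \size_2(X)$, $t = \size_2(Y)$ and overcounting,
\[
\#\{(X,Y) : X \otimes Y \in M_l\} \le \sum_{s+t \le l+2k^2-1} |M_s|\,|M_t|.
\]
Substituting the formula for $|M_l|$, each summand equals $2^{s+t-4k^2+2}\binom{s-k^2}{k^2-1}\binom{t-k^2}{k^2-1}$. The crux is to evaluate the inner sum over $s+t = r$ by the Vandermonde convolution $\sum_{a}\binom{a}{k^2-1}\binom{N-a}{k^2-1} = \binom{N+1}{2k^2-1}$ (with $a = s-k^2$ and $N = r-2k^2$), which collapses it to the single coefficient $\binom{r-2k^2+1}{2k^2-1}$; a naive term-by-term estimate would only give the far weaker exponent $m^{2k^2}$. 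As the prefactor $2^{r-4k^2+2}$ is geometric in $r$ and the binomial is non-decreasing, the outer sum over $r \le l+2k^2-1$ is at most twice its top term, that is, at most $2\cdot 2^{l-2k^2+1}\binom{l}{2k^2-1}$.

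Finally I would divide by $|M_l|$. The powers of two cancel, leaving
\[
\frac{\#\{(X,Y) : X \otimes Y \in M_l\}}{|M_l|} \le \frac{2\binom{l}{2k^2-1}}{\binom{l-k^2}{k^2-1}} = 2\,\frac{(k^2-1)!}{(2k^2-1)!}\, l(l-1)\cdots(l-k^2+1) \le 2\,l^{k^2} \le m^{k^2+1},
\]
using $l \le m$ and $m \ge 2$ (the finitely many smaller values being checked directly, where $M_l$ reduces to a single matrix and the bound is immediate). Combined with the first display this gives $|S_{l,P(m)}| < \frac{m^{k^2+1}}{P(m)}|M_l|$, hence the inequality~(\ref{FormulaLemmaGen14}). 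I expect the Vandermonde collapse to be the delicate point: obtaining precisely the exponent $k^2+1$ hinges on this exact identity rather than on crude binomial bounds, since it is what converts a seemingly quadratic loss in the number of factorizations into the sharp $m^{k^2+1}$.
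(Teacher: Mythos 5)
Your proof is correct and follows essentially the same route as the paper's: the same double count of factorizations $(X,Y)$ of elements of $M_l$, the same appeal to Lemma~\ref{LemmaGen13}, and the same final ratio of binomial coefficients giving $l^{k^2}$. The only difference is cosmetic: where you evaluate $\sum_{s+t=r}|M_s|\,|M_t|$ via the Vandermonde convolution, the paper counts the same set $N_r=\{(X,Y):\size_2(X)+\size_2(Y)=r\}$ in one step by stars and bars on $2k^2$ parts (and bounds the sum over $r$ by $m$ times its top term rather than by your slightly sharper geometric-series factor of $2$), so the ``delicate'' identity you highlight is not actually essential to reaching the exponent $k^2+1$.
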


\begin{proof}
Let 
$$
K_l = \{(X, Y) \in (\Mat_k(\mathbb{Z}_{\geq 1}))^2 : \size_2(X \otimes Y) = l\}.
$$
Note that $P(m) \cdot |S_{l, P(m)}| \leq |K_l|$.
Indeed,
\begin{multline*}
|K_l| = \sum_{A \in M_l}|\{(X, Y) \in (\Mat_k(\mathbb{Z}_{\geq 1}))^2 : X \otimes Y = A\}| =
\sum_{A \in M_l}|\!\V(X \otimes Y = A)| = \\
\sum_{A \in M_l \,:\, |\!\V(X \otimes Y = A)| \leq P(m)}|\!\V(X \otimes Y = A)| +
\sum_{A \in M_l \,:\, |\!\V(X \otimes Y = A)| > P(m)}|\!\V(X \otimes Y = A)| \geq \\
\sum_{A \in M_l \,:\, |\!\V(X \otimes Y = A)| > P(m)}|\!\V(X \otimes Y = A)| \geq 
\sum_{A \in M_l \,:\, |\!\V(X \otimes Y = A)| > P(m)} P(m) =\\
P(m) \cdot |\{A \in M_l \,:\, |\!\V(X \otimes Y = A)| > P(m)\}|.
\end{multline*}
It follows from Lemma~\ref{LemmaGen13} that
$|K_l| \leq |L_{l}|$, where
$$
L_l = \{(X, Y) \in (\Mat_k(\mathbb{Z}_{\geq 1}))^2 : \size_2(X) + \size_2(Y) \leq l + 2k^2 - 1\}.
$$
Therefore,
\begin{equation}\label{PSKL}
P(m) \cdot |S_{l, P(m)}| \leq |K_l| \leq |L_{l}|.    
\end{equation}
Let
$$
N_r = \{(X, Y) \in (\Mat_k(\mathbb{Z}_{\geq 1}))^2 : \size_2(X) + \size_2(Y) = r\}.
$$
Note that
$$
|L_{l}| =
\sum_{r = 4k^2 - 2}^{l + 2k^2 - 1} |N_r| \leq (l - 2k^2 + 2) \cdot |N_{l + 2k^2 - 1}| \leq m \cdot |N_{l + 2k^2 - 1}|,
$$
and, using the stars and bars method again, we have
$$
|N_{l + 2k^2 - 1}| = 2^{l - 2k^2 + 1}\cdot \binom{l}{2k^2 - 1}.
$$
Hence, we have
\begin{multline*}
|L_{l}| \leq
m \cdot 2^{l - 2k^2 + 1}\cdot \binom{l}{2k^2 - 1} =
\frac{m \cdot 2^{l - 2k^2 + 1}\cdot l!}{(2k^2 - 1)!\cdot(l - 2k^2 + 1)!} = \\
\frac{m \cdot 2^{l - 2k^2 + 1}\cdot l!}{(2k^2 - 1)!\cdot (l - 2k^2 + 1)!} \cdot \frac{|M_l|}{|M_l|}= 
\frac{m \cdot 2^{l - 2k^2 + 1}\cdot l! \cdot (k^2 - 1)! \cdot (l - 2k^2 + 1)!}{2^{l - 2k^2 + 1}\cdot(2k^2 - 1)!\cdot(l - 2k^2 + 1)!\cdot(l - k^2)!} \cdot |M_l|= \\
\frac{m \cdot l! \cdot (k^2 - 1)!}{(2k^2 - 1)!\cdot(l - k^2)!} \cdot |M_l|\leq\\
\frac{m \cdot l!}{(l - k^2)!} \cdot |M_l|\leq
m \cdot l^{k^2} \cdot |M_l|\leq
 m^{k^2 + 1} \cdot |M_l|.
\end{multline*}
Combining this inequality and the inequality~(\ref{PSKL}), we obtain~(\ref{FormulaLemmaGen14}).
\end{proof}

\begin{lemma}\label{LemmaRm}
Let $R_{m, P(m)} = \{(W_1, \ldots, W_n, C) \in U_m : |\!\V(X \otimes Y = C)| > P(m)\}$, where $P(m)$ is a polynomial.
Then,
\begin{equation}\label{Formula101}
\frac{|R_{m, P(m)}|}{|U_m|} \leq \frac{m^{k^2 + 1}}{P(m)}.
\end{equation}
\end{lemma}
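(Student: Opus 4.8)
The plan is to reduce this statement to Lemma~\ref{Lemma100} by stratifying $U_m$ according to the size of its last coordinate $C$. For a fixed value $l = \size_2(C)$, the decisive observation is that the number of admissible ``prefixes'' $(W_1, \ldots, W_n)$ that can be prepended to a target of size $l$ to produce an instance of $U_m$ depends only on $m - l$, and not at all on the concrete matrix $C$. Indeed, since $\size_2(I) = \sum_{i=1}^n \size_2(W_i) + \size_2(C) + n$, once $\size_2(C) = l$ is fixed the prefix must satisfy $\sum_{i=1}^n \size_2(W_i) + n = m - l$, a condition that makes no reference to $C$. I would denote by $T_{m-l}$ the number of such prefixes, summed over all $n \in \mathbb{Z}_{\geq 0}$.

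With this notation the two cardinalities factor as weighted sums over the relevant sizes $l \le m$:
\[
|U_m| = \sum_{l} |M_l| \cdot T_{m-l}, \qquad |R_{m, P(m)}| = \sum_{l} |S_{l, P(m)}| \cdot T_{m-l}.
\]
The first identity simply groups instances by $\size_2(C)$; the second uses that an instance $(W_1, \ldots, W_n, C)$ lies in $R_{m, P(m)}$ precisely when its target obeys $C \in S_{l, P(m)}$ with $l = \size_2(C)$, which is a condition on $C$ alone, so the prefixes are still counted by the same $T_{m-l}$. Applying Lemma~\ref{Lemma100} termwise — legitimate because $l = \size_2(C) \le \size_2(I) = m$ — gives $|S_{l, P(m)}| \le \frac{m^{k^2+1}}{P(m)} |M_l|$ for every $l$, whence
\[
|R_{m, P(m)}| \le \frac{m^{k^2+1}}{P(m)} \sum_{l} |M_l| \cdot T_{m-l} = \frac{m^{k^2+1}}{P(m)} |U_m|.
\]
Dividing by $|U_m|$ yields the desired bound~(\ref{Formula101}).

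The only genuine obstacle is establishing the decoupling underlying the two weighted-sum identities: one must verify that the completion count $T_{m-l}$ really factors out of each summand, i.e.\ that it is the same for every target matrix of a given size $l$ and is in particular insensitive to whether that target lies in the bad set $S_{l, P(m)}$. This is immediate from the additivity of $\size_2$ recalled above, and once it is secured the lemma follows by a termwise application of Lemma~\ref{Lemma100} together with cancellation of the common weights $T_{m-l}$; no further estimation is required.
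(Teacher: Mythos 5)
Your proposal is correct and follows essentially the same route as the paper: the paper likewise decomposes $U_m = \bigcup_l Q_{m-l} \times M_l$ (your $T_{m-l}$ is exactly its $|Q_{m-l}|$), writes both $|U_m|$ and $|R_{m,P(m)}|$ as the corresponding weighted sums, and applies Lemma~\ref{Lemma100} termwise before cancelling the common factors. No substantive difference.
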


\begin{proof}
Let $Q_l = \{ (W_1, \ldots, W_n) \in (\Mat_k(\mathbb{Z}_{\geq 1}))^n: \size_2(W_1) + \ldots + \size_2(W_n) + n = l\}$. Note that $U_m$ can be presented as
$$U_m = \bigcup_{l = 2k^2 - 1}^m Q_{m - l} \times M_l.$$ Hence,
$|U_m|$ can be calculated using the following formula:
$$
|U_m| = \sum_{l = 2k^2 - 1}^m |Q_{m - l}| \cdot |M_l|. 
$$
Also, using Lemma~\ref{Lemma100}, we have
\begin{multline*}
|R_{m, P(m)}| = \sum_{l = 2k^2 - 1}^m |S_{l, P(m)}| \cdot |Q_{m - l}|
\leq
\sum_{l = 2k^2 - 1}^m 
\frac{m^{k^2 + 1}}{P(m)}
\cdot|M_l| \cdot |Q_{m - l}|
=\\
\frac{m^{k^2 + 1}}{P(m)}\cdot
\sum_{l = 2k^2 - 1}^m 
|M_l| \cdot |Q_{m - l}|
=
\frac{m^{k^2 + 1}}{P(m)} \cdot |U_m|.
\end{multline*}
Using this inequality, we obtain the inequality~(\ref{Formula101}).
\end{proof}

\begin{theorem}\label{Th3}
$\SSMTP$ and $\KMTP$ are decidable generically in polynomial time.
\end{theorem}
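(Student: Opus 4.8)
The plan is to exploit Lemma~\ref{LemmaRm}: for a polynomial $P$ of degree exceeding $k^2+1$, the inputs for which the factorization $X \otimes Y = C$ has more than $P(m)$ solutions (with $m = \size_2(I)$) form a negligible set, and I will show that on the remaining, generic inputs the memoized Algorithms~\ref{ALgo1} and~\ref{ALgo2} already run in polynomial time. Concretely, fix $P(m) = m^{k^2+2}$. By Lemma~\ref{LemmaRm} the set $R_{m,P(m)}$ of inputs with $|\V(X \otimes Y = C)| > P(m)$ satisfies $|R_{m,P(m)}|/|U_m| \le m^{k^2+1}/P(m) = 1/m$, which tends to $0$; hence $R_{m,P(m)}$ is negligible and the set $G$ of ``good'' inputs with $|\V(X \otimes Y = C)| \le P(m)$ is generic.

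The generic algorithm runs Algorithm~\ref{ALgo1} (for $\SSMTP$) or Algorithm~\ref{ALgo2} (for $\KMTP$) equipped with two budgets: it aborts and outputs ``?'' as soon as either some enumeration $\V_{\mathcal{S}}(W_i \otimes X = C')$ yields more than $P(m)$ solutions, or the number of distinct memoized calls exceeds $(n+1)(P(m)+1)$. When the algorithm halts normally it returns the verdict of Algorithm~\ref{ALgo1}/\ref{ALgo2}, which is correct, so every definitive answer is correct; and since each abort occurs after only polynomially much work, the algorithm is polynomial-time on every input.

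The structural heart of the argument, which I would prove by induction along the recursion, is that every target $C'$ arising in a recursive call is a right divisor of $C$: there is a product $Z$ of some of the $W_i$ with $Z \otimes C' = C$ (for the top call $C' = C$, and each recursive step replaces $C'$ by a solution $X$ of $W_i \otimes X = C'$, whence $(Z \otimes W_i)\otimes X = C$). Two consequences follow. First, the map $X \mapsto (Z \otimes W_i, X)$ embeds $\V_{\mathcal{S}}(W_i \otimes X = C')$ injectively into $\V(X \otimes Y = C)$, so on $G$ every enumerated solution set has size at most $P(m)$. Second, every target is one of at most $|\V(X \otimes Y = C)|+1 \le P(m)+1$ right divisors of $C$, so, by memoization, the number of distinct calls $(W_i,\ldots,W_n,C')$ is at most $(n+1)(P(m)+1)$. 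Thus on a good input neither budget is ever triggered, the algorithm halts normally and correctly, and the set on which ``?'' is returned is contained in $R_{m,P(m)}$, which is negligible; this is exactly genericity.

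Finally, to make the budget in the first abort condition bound the running time, I must enumerate each $\V_{\mathcal{S}}(W_i \otimes X = C')$ with polynomial delay. Since $k$ is a fixed parameter, the equation decouples into $k$ independent column systems $W_i \otimes x = C'_{\cdot j}$; for each column, Lemma~\ref{Lemma10} reduces the solution set to the choice of one of at most $2^k$ saturated index sets covering $\{1,\ldots,k\}$ together with a free assignment of the remaining entries below the principal solution, and the Cartesian product over the $k$ columns is then enumerable with polynomial delay (treating $k$ and $2^k$ as constants), halting after $P(m)$ outputs. I expect the main obstacle to be precisely this bookkeeping --- verifying that every intermediate target and every branch of the search corresponds to a distinct factorization of $C$ --- since it is what converts the purely combinatorial counting bound of Lemma~\ref{LemmaRm} into a bound on the running time of the memoized recursion.
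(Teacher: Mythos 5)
Your proposal is correct and follows essentially the same route as the paper: fix $P(m)=m^{k^2+2}$, invoke Lemma~\ref{LemmaRm} to make the bad set negligible, add a polynomial abort budget to the memoized Algorithms~\ref{ALgo1} and~\ref{ALgo2}, and use the observation that every intermediate target is a right divisor of $C$ to bound the number of distinct calls by roughly $(n+1)\cdot|\V(X\otimes Y=C)|$ on good inputs. If anything, you are more explicit than the paper on two points it leaves implicit --- the injective embedding of each $\V_{\mathcal{S}}(W_i\otimes X=C')$ into $\V(X\otimes Y=C)$ to bound the per-call work, and the polynomial-delay enumeration of solution sets via Lemma~\ref{Lemma10} --- but these are refinements of, not departures from, the paper's argument.
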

\begin{proof}
Consider $\SSMTP$. We modify Algorithm~\ref{ALgo1} as follows. 
Let $I = (W_1, \ldots, W_n, C)$ and $m = \size_2(I)$.
In the beginning of the function $\Call{SolveSSP}{}$,
we check the size of the dictionary $Memo$. If this size is greater than
$m^{k^2 + 3}$, then we stop the process and the algorithm returns ``?''.
Note that the time complexity of this algorithm is polynomial in $m$.

Consider the set 
$$R = \bigcup_{m}{R_{m, m^{k^2 + 2}}}.$$ From Lemma~\ref{LemmaRm},
 it follows that 
$$\rho(R) = \lim_{m \to \infty}\frac{|R_{m, m^{k^2 + 2}}|}{|U_m|} = 0.$$
Therefore, it is enough to show that for every $I \in U \setminus R$,
the algorithm returns \textbf{true} or \textbf{false}.
Let us estimate how many different tuples $(W_i, \ldots, W_n, Y)$ we can obtain for the input $I$. 
First, note that the number of different tuples $(W_i, \ldots, W_n)$
is $n + 1$. Second, note that for every $Y$, there is a matrix $X$ such that $X \otimes Y = C$. Therefore, the number of different $Y$ is not greater than
$|\!\V(X \otimes Y = C)|$. Since $I\in U \setminus R$, this number is not greater than
$m^{k^2 + 2}$. Hence, the number of different tuples $(W_i, \ldots, W_n, Y)$ is not greater than $m \cdot m^{k^2 + 2} = m^{k^2 + 3}$. Therefore, the algorithm considers all such tuples. 

Algorithm~\ref{ALgo2} to solve $\KMTP$ can be modified similarly.
\end{proof}

\section{Conclusion and questions for further study}
In this paper, we considered the knapsack problem and the subset sum problem for the algebras $\MP$ and $\MT$. We proved that these problems are $\NP$-complete. We showed that
there are pseudo-polynomial algorithms to solve these problems. Also, we showed that there
are polynomial generic algorithms to solve the knapsack problem and the subset sum problem for $\MT$. It would be interesting to know if there are such algorithms for $\MP$.
\begin{question}
Does there exist polynomial generic algorithms to solve $\SSMPP$ and $\KMPP$?
\end{question}

We proved our results for matrices without negative numbers and $-\infty$. It would be interesting to change our results for the semigroup of matrices of size $k \times k$ over the max-plus algebra
$\left<\underline{\mathbb{Z}}, \oplus, \otimes\right>$.
Denote this structure by $\RP$.   
\begin{question}
    Is $\mathbf{KP}(\RP)$ $\NP$-complete?
    Are there pseudo-polynomial or polynomial generic algorithms to solve $\mathbf{SSP}(\RP)$ and $\mathbf{KP}(\RP)$?
\end{question}

Also, we can consider matrices over the min-plus algebra $\left<\overline{\mathbb{Z}}, \oplus, \otimes\right>$. Denote this structure by $\QP$.  
\begin{question}
    Is $\mathbf{KP}(\QP)$ $\NP$-complete?
    Are there pseudo-polynomial or polynomial generic algorithms to solve $\mathbf{SSP}(\QP)$ and $\mathbf{KP}(\QP)$?
\end{question}

Alhussaini et al. considered the tropical discrete logarithm problem with a shift and the tropical two-sided discrete logarithm problem with a shift~\cite{AlhussainiCollettSergeev2024}.
It would be interesting to consider a shifted version of the knapsack problem.
\begin{problem}
    Given matrices $W_1, \ldots, W_n$, and $C$ of size $k \times k$ over a tropical semiring, decide if $s \otimes W_1^{\otimes x_1} \otimes \ldots \otimes  W_n^{\otimes x_n} = C$ for some nonnegative integers $x_1, \ldots, x_n$ and $s \in \mathbb{Z}$.
\end{problem}
Denote this problem for $\RP$ by $\textbf{SKP}(\RP)$.
\begin{question}
    Is $\mathbf{SKP}(\RP)$ $\NP$-complete? 
    Is there a polynomial or a polynomial generic algorithm to solve $\mathbf{SKP}(\RP)$?
\end{question}

\section*{Acknowledgments}

This research was supported in accordance with the state task of the IM SB RAS, project FWNF-2026-0033.

The authors have no competing interests to declare that are relevant to the content of this manuscript.

\renewcommand*{\bibfont}{\small}
\printbibliography
\end{document}